\theoremstyle{plain}
\newtheorem{theorem}{Theorem}
\newtheorem{lemma}[theorem]{Lemma}
\theoremstyle{definition}
\newtheorem*{remark*}{Remark}
\newcommand{\pr}{\mathbf P}
\renewcommand{\Pr}{\mathbf P}
\newcommand{\e}{\mathbf E}
\newcommand{\oo}{\overline}
\begin{document}

\title[First-passage times over moving boundaries]
{First-passage times over moving boundaries for asymptotically stable walks.}
\thanks{The research of A. Sakhanenko and V. Wachtel has been supported by RSF research grant No. 17-11-01173.}

%\author{
%Denis Denisov
%\and
%Alexander Sakhanenko
%\and
%Vitali Wachtel
%}

\author[Denisov]{Denis Denisov}
\address{School of Mathematics, University of Manchester, UK}
\email{denis.denisov@manchester.ac.uk}

\author[Sakhanenko]{Alexander Sakhanenko}
\address{Novosibirsk State University, %and Sobolev Institute of Mathematics,
 630090 Novosibirsk, Russia}
%\address{Sobolev Institute of Mathematics, 630090 Novosibirsk, Russia}
\email{aisakh@mail.ru}

\author[Wachtel]{Vitali Wachtel}
\address{Institut f\"ur Mathematik, Universit\"at Augsburg, 86135 Augsburg, Germany}
\email{vitali.wachtel@mathematik.uni-augsburg.de}

%\maketitle

\begin{abstract}
Let $\{S_n, n\geq1\}$ be a random walk wih independent and identically distributed
increments and let $\{g_n,n\geq1\}$ be a sequence of real numbers. Let $T_g$ denote the
first time when $S_n$ leaves $(g_n,\infty)$. Assume that the random walk is oscillating
and asymptotically stable, that is, there exists a sequence $\{c_n,n\geq1\}$ such that
$S_n/c_n$ converges to a stable law. In this paper we determine the tail behaviour of
$T_g$ for all oscillating asymptotically stable walks and all boundary sequences
satisfying $g_n=o(c_n)$. Furthermore, we prove that the rescaled random walk conditioned
to stay above the boundary up to time $n$ converges, as $n\to\infty$, towards the stable
meander.
\end{abstract}

\keywords{Random walk, stable distribution, first-passage time, overshoot, moving boundary}
\subjclass{Primary 60G50; Secondary 60G40, 60F17}

\maketitle

%%%%%%%%%%%%%%%%%%%%%%%%%%%%%%%%%%%%%%%%%%%%%%%%%%%%%%%%%%%%%%%%%%%%%%%%%%%%%%%%%%%%%%%%%%%%%%%%%%%%%%%%
%%%%%%%%%%%%%%%%%%%%%%%%%%%%%%%%%%%%%%%%%%%%%%%%%%%%%%%%%%%%%%%%%%%%%%%%%%%%%%%%%%%%%%%%%%%%%%%%%%%%%%%%
\section{Introduction and main results}
Consider a one-dimensional random walk
$$
S_0=0,\ S_n=X_1+\cdots+X_n,\,n\geq1,
$$
where $X,X_1,X_2,\ldots$ are i.i.d. random variables.
For a real-valued sequence  $\{g_n\}$  let
\begin{equation}
\label{def2}
T_g:=\min\{n\geq1:S_n\leq g_n\}
\end{equation}
be the first crossing of time of the moving boundary $\{g_n\}$ by $\{S_n\}$.
The aim of this paper is to study the asymptotics of $\pr(T_g>n)$
as $n$ goes to infinity.

An important particular case of this problem is the case of
a constant boundary $g_n\equiv-x$ for some $x$. In this case
$T_g\equiv\tau_x$, where
$$
\tau_x:=\min\{n\geq1:S_n\leq -x\}.
$$
For constant boundaries the following result
(see Doney \cite{Don95}) is available: if
\begin{equation}
\label{Spitzer}
\Pr(S_n>0)\to\rho\in(0,1)
\end{equation}
then, for every fixed $x\geq0$,
\begin{equation}                           \label{iid}
\mathbf{P}(\tau_x>n)\sim V(x) n^{\rho-1}L(n),
\end{equation}
where $V(x)$ denotes the renewal function corresponding to
the weak descending ladder height process and $L(n)$ is a slowly varying function. (Here and in what
follows all unspecified limits are taken with respect to
$n\to\infty$.)

Greenwood and Novikov \cite[Theorem 1]{GN87} have shown that
if the sequence $\{g_n\}$ is decreasing and concave then
\begin{equation}
\label{GN}
\frac{\pr(T_g>n)}{\pr(\tau_0>n)}\to R_g\in(0,\infty].
\end{equation}
If, in addition, $\e |g_{\tau_0}|$ is finite, then $R_g<\infty$.
This result has been generalised by Denisov and Wachtel\cite{DW16}:
if $\{g_n\}$ decreases and $\{V(-g_n)\}$ is subadditive then \eqref{GN} holds
and $R_g$ is finite for random walks satisfying
$\e V(-g_{\tau_0})<\infty$.

If $g_n\ge 0$ is increasing, then, according to Proposition 1 in \cite{DW16},
\begin{equation}
\label{DW}
\frac{\pr(T_g>n)}{\pr(\tau_0>n)}\to L_g\in[0,1].
\end{equation}
Moreover, if $\e X=0$ and $\e X^2<\infty$ then $L_g>0$ if and only if
$\e g_{\tau_0}<\infty$. An alternative version of this result has been
obtained earlier in \cite{GN87}: it was assumed there that $\e X=0$ and
that $\e e^{-\lambda X}<\infty$ for some $\lambda>0$.

In view of \eqref{iid}, the condition $\e|g_{\tau_0}|<\infty$ is
equivalent to
$$
\sum_{n=1}^\infty |g_n|L(n)n^{\rho-2}<\infty.
$$
In particular, $\e|g_{\tau_0}|<\infty$ provided that $|g_n|=O(n^\gamma)$
with some $\gamma<1-\rho$. Since the asymptotic behaviour of the renewal
function $V$ can not be expressed in terms of $\rho$ only, it is not clear
how to use the condition $\e V(|g_{\tau_0}|)<\infty$. The trivial bound
$V(x)\le Cx$ reduces $\e V(|g_{\tau_0}|)<\infty$ to $\e|g_{\tau_0}|<\infty$.
In order to have a more accurate information on $V$ we need to impose
further restrictions on the distribution of $X$.

In the present paper we shall consider the class of asymptotically stable
random walks. Let
\begin{equation*}
\mathcal{A}:=\{0<\alpha <1;\,|\beta |<1\}\cup \{1<\alpha <2;|\beta |\leq
1\}\cup \{\alpha =1,\beta =0\}\cup \{\alpha =2,\beta =0\}
\end{equation*}%
be a subset in $\mathbb{R}^{2}.$ For $(\alpha ,\beta )\in \mathcal{A}$ and a
random variable $X$ write $X\in \mathcal{D}\left( \alpha ,\beta \right) $ if
the distribution of $X$ belongs to the domain of attraction of a stable law
with characteristic function%
\begin{equation}
G_{\alpha ,\beta }\mathbb{(}t\mathbb{)}:=\exp \left\{ -c|t|^{\,\alpha
}\left( 1-i\beta \frac{t}{|t|}\tan \frac{\pi \alpha }{2}\right) \right\},
\ c>0,  \label{std}
\end{equation}%
and, in addition, $\mathbf{E}X=0$ if this moment exists. Let
$\{c_n\}$ be a sequence of positive numbers specified by the relation%
\begin{equation}
c_n:=\inf \left\{ u\geq 0:\mu (u)\leq n^{-1}\right\},\ n\geq1 ,  \label{Defa}
\end{equation}%
where
\begin{equation*}
\mu (u):=\frac{1}{u^{2}}\int_{-u}^{u}x^{2}\mathbf{P}(X\in dx).
\end{equation*}%
It is known (see, for instance, \cite[Ch. XVII, \S 5]{F71}) that for every
$X\in \mathcal{D}(\alpha ,\beta )$ the function $\mu (u)$ is regularly
varying with index $(-\alpha )$. This implies that $c_n$ is regularly
varying with index $\alpha ^{-1}$, i.e., there exists a function $l_{1}(x)$,
slowly varying at infinity, such that
\begin{equation}
c_n=n^{1/\alpha }l_{1}(n).  \label{asyma}
\end{equation}%
In addition, the scaled sequence $\left\{ \frac{S_{n}}{c_n},\,n\geq 1\right\} $
converges in distribution to the stable law given by (\ref{std}). In this
case we say that $S_n$ is an \emph{asymptotically stable random walk}.
For every $X\in \mathcal{D}\left( \alpha ,\beta \right)$
there is an explicit formula for $\rho$,
\begin{equation}
\displaystyle\rho =\left\{
\begin{array}{ll}
\frac{1}{2},\ \alpha =1, &  \\
\frac{1}{2}+\frac{1}{\pi \alpha }\arctan \left( \beta \tan \frac{\pi \alpha
}{2}\right) ,\text{ otherwise}. &
\end{array}%
\right.  \label{ro}
\end{equation}
If $X\in\mathcal{D}(\alpha,\beta)$ then the function $V(x)$ is regularly varying
with index $\alpha(1-\rho)$. Moreover, according to Lemma~13 in \cite{VW09},
\begin{equation}
\lim_{n\to\infty} V(c_n)\pr(\tau_0>n)=:A\in(0,\infty).
\label{V-tau}
\end{equation}

By Corollary 1 in \cite{DW16}, if $S_n$ is asymptotically stable then
the finiteness of $\e V(|g_{\tau_0}|)$ is equivalent to
$$
\sum_{n=1}^\infty\frac{V(|g_n|)}{nV(c_n)}<\infty.
$$
Using the fact that the function $V(x)$ is regularly varying of index
$\alpha(1-\rho)$, we see that $\e V(|g_{\tau_0}|)$ is finite if
$|g_n|=O(c_n/\log^a n)$ with some $a>1/\alpha(1-\rho)$.

If $\{g_n\}$ is decreasing but $V(-g_n)$ is not subadditive then we can
not apply Theorem 1 from \cite{DW16}. But it is shown in Theorem 2 in the
same paper that \eqref{GN} with finite $R_g$ remains valid for boundaries
satisfying
\begin{equation}
\label{Int.Test}
\sum_{n=1}^\infty\frac{V(|g_n|)}{nV(c_n/\log n)}<\infty.
\end{equation}
Moreover, it is proven in \cite{DW16} that if $\{g_n\}$ increases and
satisfies \eqref{Int.Test} then the constant $L_g$ in \eqref{DW} is
strictly positive. We note also that \eqref{Int.Test} is fulfilled if,
for example, $g_n=O(c_n/\log^{1+a}n)$ with some $a>1/\alpha(1-\rho)$.
A logarithmic version of this result has been shown by Aurzada and
Kramm~\cite{AK16}. More precisely, they have proven that
$$
\pr(T_g>n)=n^{\rho-1+o(1)}
$$
for any boundary satisfying $g_n=O(n^\gamma)$ with some $\gamma<1/\alpha$.

In the present paper we are going to derive the asymptotics of $\pr(T_g>n)$
for all boundaries $g_n=o(c_n)$. Since $c_n$ is the scaling sequence for
the random walk $S_n$, it is natural to expect that the behaviour of
$\pr(T_g>n)$ is quite similar to the behaviour of $\pr(\tau_0>n)$. The
following result confirms this conjecture.
\begin{theorem}
\label{T1}
Assume that $X\in \mathcal{D}\left( \alpha ,\beta \right)$.
If $g_n=o(c_n)$ and $\pr(T_g>n)>0$ for all $n\geq1$ then
\begin{equation}
\label{T1.1}
\frac{\pr(T_g>n)}{\pr(\tau_0>n)}\sim U_g(n),
\end{equation}
where $U_g$ is a positive slowly varying function with values
$$
0<U_g(n)=\e[V(S_n-g_n);T_g>n],\quad n\geq1.
$$
\end{theorem}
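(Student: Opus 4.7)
The plan is to prove the theorem via a Markov time-splitting at an intermediate time $m=m(n)\to\infty$ with $m=o(n)$, reducing $\pr(T_g>n)$ to a constant-boundary first-passage problem, and then to exploit the near-harmonic property of the renewal function $V$ to establish slow variation of $U_g$.

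Positivity of $U_g(n)$ is immediate: on $\{T_g>n\}$ one has $S_n-g_n>0$, so $V(S_n-g_n)>0$, and $\pr(T_g>n)>0$ by hypothesis. For the main estimate, condition on $\mathcal{F}_m$:
\[
\pr(T_g>n)=\e[\pr(T_g>n\mid\mathcal{F}_m);\,T_g>m].
\]
Given $S_m=s>g_m$, the conditional probability equals $\pr(s+\tilde S_{j-m}>g_j,\,m<j\le n)$ for an independent copy $\tilde S$ of $S$. Choose $m$ so that $\sup_{j\ge m}|g_j|/c_m\to 0$ (possible since $g_j=o(c_j)$); then a sandwich against constant boundaries at levels $g_m\pm\ee c_m$ for the shifted walk, combined with \eqref{iid} applied to each constant-boundary problem (under the normalisation $V(0)=1$), yields
\[
\pr(T_g>n\mid\mathcal{F}_m)\sim V(s-g_m)\,\pr(\tau_0>n-m)
\]
uniformly in $s$ at scale $c_m$. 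Taking expectations and using $\pr(\tau_0>n-m)\sim\pr(\tau_0>n)$ (regular variation of index $\rho-1$), one obtains $\pr(T_g>n)\sim U_g(m)\,\pr(\tau_0>n)$.

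To promote $U_g(m)$ to $U_g(n)$ and establish slow variation, apply the harmonic identity $\e[V(s+X);s+X>0]=V(s)$ (valid for $s>0$) to the increment $U_g(n+1)-U_g(n)$. This yields
\[
U_g(n+1)-U_g(n)=\e[V(S_n-g_{n+1})-V(S_n-g_n);\,T_g>n]+R_n,
\]
where $R_n$ collects a boundary-crossing error term arising when $S_n$ lies between $g_n$ and $g_{n+1}$. By regular variation of $V$, the hypothesis $g_n=o(c_n)$, and concentration of $S_n/c_n$ on $\{T_g>n\}$ (a consequence of asymptotic stability), one obtains $|U_g(n+1)-U_g(n)|=o(U_g(n))$, which forces $U_g$ to be slowly varying. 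Combining with the Markov estimate and using slow variation to replace $U_g(m)$ by $U_g(n)$ along a suitable $m=m(n)=o(n)$ then gives \eqref{T1.1}.

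The principal obstacle is the uniform perturbation estimate $\pr(T_g>n\mid\mathcal{F}_m)\sim V(S_m-g_m)\pr(\tau_0>n-m)$ in a form strong enough to permit passage to expectations. This requires (i) regular variation of $V$ to compare $V(s-g_j)$ with $V(s-g_m)$ when $|g_j-g_m|/c_m\to 0$; (ii) concentration of $S_m/c_m$ on compact subsets of $(0,\infty)$ under $\pr(\,\cdot\mid T_g>m)$, to rule out extreme values of $S_m$ that would inflate the integrand; and (iii) a careful sandwich of the moving-boundary first-passage probability between two constant-boundary ones. Controlling the cumulative error $R_n$ in the slow variation step is a second subtle point, relying on the scale separation $g_n=o(c_n)$ and the regularity of $V$.
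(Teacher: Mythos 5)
Your overall plan—split at an intermediate time, sandwich the moving boundary between constant boundaries over the remaining steps, apply \eqref{iid}-type asymptotics, and deduce slow variation of $U_g$—is the right skeleton, and coincides with the paper's strategy. However, your argument for slow variation contains a genuine logical gap: the claim that $|U_g(n+1)-U_g(n)|=o(U_g(n))$ ``forces $U_g$ to be slowly varying'' is false. For instance $U(n)=e^{\sqrt n}$ satisfies $|U(n+1)-U(n)|=o(U(n))$ but $U(2n)/U(n)\to\infty$. More to the point, your increment bound is of order $V(G_n)\pr(T_g>n)$ per step; summing $n/2$ such terms over a doubling window $[n/2,n]$ gives a quantity of order $nV(G_n)\pr(\tau_0>n)$, which need not vanish. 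The paper avoids this accumulation entirely by using the exact harmonicity $\e\left[V(y+S_k);\ \min_{j\le k}(y+S_j)>0\right]=V(y)$ across \emph{all} $n-k$ steps in one stroke (Lemma~\ref{lem:Z-bound}): replacing the moving boundary on $[k,n]$ by the constant boundaries $\pm 2G_n$ and telescoping with the martingale $V(y+S_j)\mathbb{I}\{\tau_y>j\}$ yields the one-shot bound $\left|\e Z^*_k - \e Z^*_n\right|\le 2V(G_n)\pr(T_g>k)$, with a \emph{single} error of size $V(G_n)$ rather than $n-k$ of them. Combined with $\pr(T_g>k)\le C_1\e Z^*_k\pr(\tau_0>k)$, this gives $\max_{k\in[n/2,n]}|\e Z^*_n/\e Z^*_k-1|\le 2C_1V(G_n)\pr(\tau_0>n/2)\to 0$, which is genuine slow variation.

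Two further points you pass over. First, the a priori bound $\pr(T_g>n)\le C_1\, U_g(n)\,\pr(\tau_0>n)$ (Lemma~\ref{lem:Ubound}) is indispensable for controlling all the error terms above, and it is not obvious; the paper derives it from the stochastic domination $\pr(S_n\ge x\mid T_g>n)\ge\pr(S_n\ge x)$ of Lemma~24 in \cite{DSW16}, which your ``concentration of $S_n/c_n$ on $\{T_g>n\}$'' does not supply. Second, your deterministic split at $m$ leaves the contribution from $\{S_m-g_m\gg c_m\}$ unexamined: after sandwiching one must justify that $\e[V(S_m-g_m);T_g>m,\,S_m-g_m>A]$ is negligible for $A\gg c_m$. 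The paper handles this by splitting at the first-passage time $\nu_{m(n)}=\nu(c_{m(n)})\wedge m(n)$ and exploiting one-step overshoot estimates (Lemma~\ref{lem:tail_exp}); a deterministic split would need a separate tail argument which you should supply.
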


If $\e X=0$ and $\e X^2<\infty$ then \eqref{T1.1} is a special case of
Theorem 2 from our previous paper \cite{DSW16}, where random walks with
independent but not necessarily identical distributed increments have been
considered.

Theorem~\ref{T1} states that the tail of $T_g$ is regularly varying tail
with index $\rho-1$ for any boundary $g_n=o(c_n)$. We now turn to the
question, for which boundaries the sequences $\pr(T_g>n)$ and $\pr(\tau_0>n)$
are asymptotically equivalent. In other words, we want to find conditions
which guarantee that $U_g(n)$ is bounded away from $0$ and from $\infty$.
\begin{theorem}
\label{T2}
Assume that $X\in \mathcal{D}\left( \alpha ,\beta \right)$ and that,
as $x\to\infty$,
\begin{equation}
\label{lrt}
V(x+1)-V(x)=O\left(\frac{V(x)}{x}\right).
\end{equation}
\begin{itemize}
 \item [(a)]
If
\begin{equation}
\label{T2.1}
\sum_{n=1}^\infty\frac{\max_{k\le n}|g_k|}{nc_n}<\infty
\end{equation}
then there exist positive constants $U_*$ and $U^*$ such that
\begin{equation}
\label{T2.2}
U_*\le U_g(n)\le U^*\quad\text{for all }n\geq1.
\end{equation}
\item[(b)]
Moreover, if the sequence $\{g_n\}$ is monotone and \eqref{T2.1} holds then
\begin{equation}
\label{T2.3}
\lim_{n\to\infty}U_g(n)=:U_g(\infty)\in(0,\infty).
\end{equation}
\end{itemize}
\end{theorem}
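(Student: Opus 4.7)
My approach tracks the sequence $U_g(n) = \mathbf{E}[V(S_n - g_n); T_g > n]$ directly and controls the telescoping sum of its increments. The cornerstone is the defining martingale identity of the renewal function $V$, namely $\mathbf{E}[V(y+X)\ind_{y+X>0}] = V(y)$ for $y \ge 0$. Conditioning on $\mathcal F_{n-1}$ on the event $\{T_g > n-1\} \subset \{S_{n-1} > g_{n-1}\}$ yields
\[
\mathbf{E}[V(S_n - g_{n-1}) \ind_{S_n > g_{n-1}} \mid \mathcal F_{n-1}] = V(S_{n-1} - g_{n-1}),
\]
so if the boundary used at step $n$ were $g_{n-1}$ rather than $g_n$, $U_g(n)$ would coincide with $U_g(n-1)$.

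The plan is to quantify the cost of replacing $g_{n-1}$ by $g_n$. Two corrections arise: a pointwise change $V(S_n - g_n) - V(S_n - g_{n-1})$, and a boundary-strip contribution accounting for the discrepancy between $\ind_{S_n > g_n}$ and $\ind_{S_n > g_{n-1}}$ on a set of width $|g_n - g_{n-1}|$. The regularity hypothesis \eqref{lrt}, summed, yields $|V(x+h) - V(x)| \le C|h|V(x)/x$ for $|h| \le x/2$, which controls the first correction pointwise; a similar application of \eqref{lrt} near the boundary handles the strip. Combining these,
\[
|U_g(n) - U_g(n-1)| \le C|g_n - g_{n-1}| \cdot \mathbf{E}\!\left[\frac{V(S_n)}{S_n} ; T_g > n\right].
\]

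Since $V$ is regularly varying of index $\alpha(1-\rho) < 1$ and $S_n$ is of order $c_n$ on $\{T_g > n\}$, Theorem~\ref{T1} together with \eqref{V-tau} yields $\mathbf{E}[V(S_n)/S_n ; T_g > n] \le C\, U_g(n)/c_n$. Hence the total variation of $U_g$ up to time $N$ is dominated by $\sup_{k \le N} U_g(k) \cdot \sum_{n \le N} |g_n - g_{n-1}|/c_n$; Abel summation converts the last sum into $\sum_n \max_{k \le n}|g_k|/(n c_n)$, which is finite by \eqref{T2.1}. A Gr\"onwall-type bootstrap then caps $U_g$ at some $U^*$, while the lower bound $U_* > 0$ follows because $U_g$ is positive (guaranteed by $\mathbf{P}(T_g>n)>0$) and slowly varying (Theorem~\ref{T1}), so an absolutely summable perturbation cannot drive it to $0$. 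For part~(b), monotonicity of $\{g_n\}$ makes each increment essentially of definite sign, so $U_g(n)$ decomposes as a bounded monotone sequence plus an absolutely summable remainder, which forces convergence to a finite positive limit.

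The chief obstacle is the concentration estimate $\mathbf{E}[V(S_n)/S_n ; T_g > n] \le C U_g(n)/c_n$: one must exclude the contribution of $\{S_n \le \delta c_n\} \cap \{T_g > n\}$, where $V(S_n)/S_n$ can be large. This likely requires either a small-deviation bound for the conditioned walk or a separate treatment via Theorem~\ref{T1} applied to a slightly perturbed boundary, in both cases leveraging the assumption $g_n = o(c_n)$ and the emerging meander limit.
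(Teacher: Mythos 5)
Your telescoping strategy has a genuine gap at the Abel-summation step, and this gap is exactly what the paper's dyadic-block argument is designed to avoid. You want to bound the total variation $\sum_{n}|g_n-g_{n-1}|/c_n$ by $\sum_{n}\max_{k\le n}|g_k|/(nc_n)$, but this inequality is false for oscillating boundaries. Take $g_n=(-1)^n$: then $\max_{k\le n}|g_k|\equiv 1$, so \eqref{T2.1} holds (indeed $\sum 1/(nc_n)<\infty$), yet $|g_n-g_{n-1}|\equiv 2$ and $\sum 2/c_n=\infty$. Abel summation does convert $\sum(g_n-g_{n-1})/c_n$ into roughly $\sum g_n/(nc_n)$, but only when the increments have a fixed sign; with signs allowed to alternate, the total-variation sum can be arbitrarily much larger than $\sum G_n/(nc_n)$. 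So your increment-by-increment telescoping can at best prove Theorem~\ref{T2}(a) under a bounded-variation hypothesis, not under \eqref{T2.1}, and \eqref{T2.1} is precisely formulated to admit wildly oscillating boundaries confined to a shrinking envelope $|g_n|\le G_n=o(c_n)$.

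The paper sidesteps this by never differencing consecutive $g_n$'s at all: it compares $\mathbf{E}[V(S_m\pm G_m);T_g>m]$ between the dyadic times $m\in(n,2n]$ and $n$, using the bracket $-G_{2n}\le g_k-g_{n+j}\le G_{2n}$ (for $k\le n$, $j\le n$) together with the exact harmonicity of $V(y+S_k)\mathbb{I}\{\tau_y>k\}$ for \emph{constant} boundaries. The only correction then is $\mathbf{E}[V(S_n+G_{2n})-V(S_n+G_n);T_g>n]$, controlled via \eqref{lrt} and a local estimate (the paper's Lemma~\ref{lem:local}), and the iteration over dyadic scales produces the product $\prod_j(1+B\,G_{2^{j+1}}/c_{2^j})$, which \eqref{T2.1} makes convergent. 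The crucial difference from your route is that only the \emph{maximum} $G_n$ enters, never the increments; oscillations inside the envelope are free. Your other two difficulties --- the boundary-strip correction and the small-ball contribution to $\mathbf{E}[V(S_n)/S_n;T_g>n]$ --- are real but surmountable with the paper's local lemma; the Abel step, however, is a dead end for part~(a). For part~(b) your decomposition into a monotone piece plus a summable remainder is in the right spirit, but the paper's version is cleaner: under monotone $g_n$, $V(S_n-g_n)\mathbb{I}\{T_g>n\}$ is a super- or sub-martingale (Lemmas~1 and~4 of \cite{DW16}), so $U_g(n)$ is itself monotone and the bounds from part~(a) give convergence immediately.
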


Mogulskii and Pecherskii \cite{MP79} have shown that if the boundary sequence
satisfies the condition $g_{n+k}\le g_n+g_k$, then there exists a sequence
of events $\{E_n\}$ such that
\begin{equation}
\label{WH0}
E_n\subseteq\{S_n>g_n\}\quad\text{for every}\quad n\ge1
\end{equation}
and
\begin{equation}
\label{WH1}
\sum_{n=0}^\infty z^n\mathbf{P}(T_g>n)=
\exp\left\{\sum_{n=1}^\infty\frac{z^n}{n}\mathbf{P}(E_n)\right\}.
\end{equation}
This relation is a generalisation of the classical factorisation identity for
the stopping time $\tau_0$. Unfortunately, the events $E_n$ have very complicated
structure in the case of moving boundaries and there is no hope to derive the
tail asymptotics for $T_g$ from \eqref{WH1}. But \eqref{WH0} allows one to obtain
upper bounds for $\pr(T_g>n)$. It has been shown in Remark 2 in \cite{DW16} that
$$
\pr(T_g>n)\le q_n
$$
with $q_n$ defined by
\begin{equation}
\label{WH2}
\sum_{n=0}^\infty z^nq_n=\left(\sum_{n=0}^\infty z^n\mathbf{P}(\tau_0>n)\right)
\exp\left\{\sum_{n=1}^\infty\frac{z^n}{n}\Delta_n\right\},
\end{equation}
where $\Delta_n:=\pr(S_n>g_n)-\pr(S_n>0)$. Using the standard estimate for the
concentration function of $S_n$, one gets
$$
|\Delta_n|\le C\frac{|g_n|+1}{c_n}
$$
From this bound and \eqref{WH2} we infer that if $\frac{|g_n|}{nc_n}$ is summable
then
$$
\pr(T_g>n)\le q_n\le C\pr(\tau_0>n).
$$
It is worth mentioning that the condition \eqref{T2.1} is quite close to the summability
of the sequence $\left\{\frac{|g_n|}{nc_n}\right\}$.

If the boundary sequence is strictly positive, $g_n\to\infty$ and $g_n=o(c_n)$, then,
by the local limit theorem for $S_n$,
$$
\Delta_n\sim -f_{\alpha,\beta}(0)\frac{g_n}{c_n},
$$
where $f_{\alpha,\beta}(x)$ is the density function of the stable distribution given by
\eqref{std}. If we additionally assume that $\frac{g_n}{nc_n}$ is not summable, then,
by \eqref{WH2},
$$
\pr(T_g>n)=o(\pr(\tau_0>n)).
$$
This indicates that the condition \eqref{T2.1} is very close to the optimal one, and it
cannot be relaxed in the case of monotone increasing boundaries.

We now turn to the conditional limit theorem. Define the rescaled process
\begin{equation}
\label{s-def}
s_n(t)=\frac{S_{[nt]}}{c_n},\quad t\in[0,1].
\end{equation}
It has been shown by Doney \cite{Doney85} that if $X\in\mathcal{D}(\alpha,\beta)$ then, for every fixed
$x$, $s_n$ conditioned on $\{\tau_x>n\}$ converges weakly on $D[0,1]$ towards a process
$M_{\alpha,\beta}$. This limiting process is usually called the {\it stable L\'evy meander}. Our next
result shows that this convergence remain valid for all moving boundaries satisfying $g_n=o(c_n)$.
\begin{theorem}
\label{T3}
Assume that the conditions of Theorem~\ref{T1} hold. Then the distribution of $s_n$ conditioned on
$\{T_g>n\}$ converges weakly on $D[0,1]$ towards $M_{\alpha,\beta}$.
\end{theorem}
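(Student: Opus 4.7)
The plan is to prove Theorem~\ref{T3} by the standard two-step scheme: convergence of finite-dimensional distributions under the conditional law $\pr(\,\cdot\,|T_g>n)$, followed by tightness in $D[0,1]$. The guiding idea is that since $g_n=o(c_n)$, the moving boundary becomes invisible at the scale $c_n$, so the problem should reduce through the Markov property to the constant boundary $g_n\equiv 0$ handled by Doney~\cite{Doney85}.

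For the finite-dimensional distributions, fix $0<t_1<\cdots<t_k<1$ and a bounded continuous $F:\mathbb R^k\to\mathbb R$. Applying the Markov property at $m=[nt_k]$ gives
\begin{equation*}
\e\bigl[F\bigl(s_n(t_1),\ldots,s_n(t_k)\bigr)\ind\{T_g>n\}\bigr]
=\e\bigl[F\bigl(S_{[nt_1]}/c_n,\ldots,S_m/c_n\bigr)\ind\{T_g>m\}\,q_{n,m}(S_m)\bigr],
\end{equation*}
where $q_{n,m}(x):=\pr(x+\tilde S_j>g_{m+j}\text{ for }j=1,\ldots,n-m)$ for an independent copy $\tilde S$ of $S$. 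The key step is to show that $q_{n,m}(\xi c_n)\to q(\xi):=\pr(\min_{0\le s\le 1-t_k}Z_s>-\xi)$ uniformly on compact subsets of $(0,\infty)$, where $Z$ is the stable L\'evy process started at $0$; this follows from the functional invariance principle for $\tilde S$, since $g_{m+j}=o(c_n)$ implies that the shifted boundary $g_{m+j}-\xi c_n$ is asymptotically constant at $-\xi c_n$ on the scale $c_{n-m}\sim(1-t_k)^{1/\alpha}c_n$. Combining this uniform limit with Theorem~\ref{T1}---which supplies $\pr(T_g>n)\sim U_g(n)\pr(\tau_0>n)$ and, used inductively, the conditional fdd convergence of $(s_n(t_1),\ldots,s_n(t_{k-1}))$ under $\{T_g>m\}$---yields the stable meander finite-dimensional law at $t_1,\ldots,t_k$. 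The boundary case $t_k=1$ is handled by splitting instead at $t_{k-1}$ and invoking an unconditional local limit at the endpoint.

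Tightness in $D[0,1]$ under the conditional law follows from tightness of the unconditioned $s_n$ (classical invariance principle) combined with the fact that $\pr(T_g>n)$ is regularly varying of index $\rho-1$: polynomially decaying modulus-of-continuity bounds for $s_n$ dominate $\pr(T_g>n)$ and therefore transfer to the conditioned measure. One additionally checks that $s_n$ is small near $t=0$ using $M_{\alpha,\beta}(0)=0$ together with the marginal fdd convergence. The main obstacle I anticipate is establishing the uniformity in $\xi$ of the convergence $q_{n,m}(\xi c_n)\to q(\xi)$: without it, one cannot interchange the limit with the integration against the law of $S_m/c_n$ furnished by the inductive step. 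This amounts to a modest strengthening of the invariance principle for walks with shifted starting points, and is feasible because $q(\xi)$ is continuous and bounded and the unconditional invariance principle already delivers uniform-on-compacts convergence of laws.
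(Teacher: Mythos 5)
Your plan splits the time interval at a \emph{macroscopic} time $m=[nt_k]$, whereas the paper's proof splits at a \emph{sub-macroscopic} stopping time $\nu_{m(n)}$ with $m(n)=o(n)$, namely the first time $Z_k$ exceeds $c_{m(n)}$ (truncated at $m(n)$). This difference matters, and your version has two genuine gaps.

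First, the ``inductive'' finite-dimensional argument is circular. The identity
$\e[F(s_n(t_1),\dots,s_n(t_k))\ind\{T_g>n\}]=\e[F(S_{[nt_1]}/c_n,\dots,S_m/c_n)\ind\{T_g>m\}\,q_{n,m}(S_m)]$
is fine, but to pass to the limit you need to know the joint conditional law of $(s_n(t_1),\dots,s_n(t_k))$ given $T_g>m$, which already involves $k$ time points and an endpoint marginal $S_m/c_n$ — precisely the quantity you are trying to determine. The induction does not reduce $k$; its base case $k=1$ still requires the conditional law of $S_m/c_n$ given $T_g>m$ for macroscopic $m$, which is the time-$t_1$ meander marginal and is not available a priori. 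In addition, your uniform convergence $q_{n,m}(\xi c_n)\to q(\xi)$ is only claimed on compacts of $(0,\infty)$, but the limiting meander marginal at time $t_k$ has positive mass arbitrarily close to $0$ and unbounded tails, so you would separately have to control the contributions of $\xi$ near $0$ (where $q_{n,m}$ is governed by $V(\xi c_n)\pr(\tau_0>n-m)$ and not by the unconditional invariance principle) and of large $\xi$. None of this is addressed. The paper avoids all of it by decomposing at $\nu_{m(n)}$: there $Z_{\nu_{m(n)}}$ is, up to negligible tails (Lemmas~\ref{lem:tail_exp}, \ref{lem:Z-exp}, relations \eqref{T1.4}--\eqref{T1.5}), confined to $[\oo{\delta}^2_nc_n,\oo{\delta}_nc_n]$, i.e.\ a window that is $\gg G_n$ but $\ll c_n$; on this window the path before $\nu_{m(n)}$ vanishes on scale $c_n$ (so $f$ does not see it), and the remaining problem is a constant-boundary problem handled by Lemma~\ref{lem:meander}, which is the uniform version of Doney's meander convergence for starting points $x\le\delta_nc_n$. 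That lemma is exactly the tool that dispenses with the circularity.

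Second, the tightness argument cannot work as stated. The unconditioned modulus-of-continuity bounds
$\pr\bigl(\sup_{|t-s|\le\delta}|s_n(t)-s_n(s)|>\eta\bigr)\le\varepsilon(\delta,\eta)$
come from the invariance principle and do \emph{not} decay in $n$; they tend only to a $\delta$-dependent limit. Dividing by $\pr(T_g>n)\asymp n^{\rho-1}U_g(n)$ therefore yields an upper bound that \emph{diverges} as $n\to\infty$ for every fixed $\delta$, so nothing is transferred to the conditioned measure. One genuinely needs conditional tightness, which in the paper is obtained for free from Lemma~\ref{lem:meander} applied after time $\nu_{m(n)}$ (where the starting height lies in the controlled window) together with the fact that $\max_{k\le\nu_{m(n)}}|S_k|/c_n=o(1)$ on that event. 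If you want to keep your route, you would need, e.g., a conditional moment inequality for increments of the conditioned walk, which is not supplied by the classical invariance principle.
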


For random walks with zero mean and finite variance we have convergence towards the Brownian meander.
In \cite{DSW16} we have proven that this convergence holds even for random walks with non-identically
distributed increments satisfying the classical Lindeberg condition. But for random walks with
infinite variance the statement of Theorem~\ref{T3} is new.

The conditional limit theorem allows one to complement Theorem~\ref{T2} by the following statement:
if $g_n=o(c_n)$ is monotone decreasing and $|g_n|/nc_n$ is not summable, then
\begin{equation}
\label{U-infinity}
\lim_{n\to\infty}U_g(n)=\infty.
\end{equation}
(We shall prove \eqref{U-infinity} at the end of the paper.)

Recall that we have shown after Theorem~\ref{T2} that if $g_n$ is increasing and $g_n/nc_n$ is not
summable then $\lim_{n\to\infty}U_g(n)=0$. This implies that the conditions on the boundary in
Theorem~\ref{T2}(b) are optimal. As a result we have determined the asymptotic behaviour of $U_g$ for
all asymptotically stable walks satisfying \eqref{lrt}, which is a bit weaker than the strong renewal
theorem for ladder heights. It is well-known from the renewal theory that the strong renewal theorem
and \eqref{lrt} hold for all walks satisfying $\alpha(1-\rho)<1/2$. But if $\alpha(1-\rho)\ge 1/2$ then
\eqref{lrt} may fail, see Example 4 in \cite{W12}. We refer to a recent paper by Caravenna and
Doney\cite{CD16} for necessary and sufficient conditions for the strong renewal theorem.

Our approach to moving boundaries is based on the following universality idea. The condition
$g_n=o(c_n)$ means that the boundary reduces to the constant zero boundary after the rescaling of the
random walk by $c_n$. Therefore, it is natural to expect that the asymptotic behaviour of $\pr(T_g>n)$
will be simiar to that of $\pr(\tau_0>n)$. This is an adaption of the universality methodology
suggested in our recent paper \cite{DSW16}, where the first-passage problems for random walks belonging
to the domain of attaraction of the Brownian motion have been considered. It is worth mentioning that
in the present paper we use a different type of universality: we fix the distribution of the random
walk and look for a possible widest class of boundary functions with the same type of the tail behaviour
for the corresponding first-pasage time.
%%%%%%%%%%%%%%%%%%%%%%%%%%%%%%%%%%%%%%%%%%%%%%%%%%%%%%%%%%%%%%%%%%%%%%%%%%%%%%%%%%%%%%%%%%%%%%%%%%%%%%%%
%%%%%%%%%%%%%%%%%%%%%%%%%%%%%%%%%%%%%%%%%%%%%%%%%%%%%%%%%%%%%%%%%%%%%%%%%%%%%%%%%%%%%%%%%%%%%%%%%%%%%%%%
\section{Some results from the fluctuation theory}
In this section we collect some known facts about first-passage problems with constant boundaries.
We start with the following result on exit times.
\begin{lemma}
\label{lem:const_boundary}
Let $S_n$ be an asymptotically stable random walk. Then, for every
$\delta_n\downarrow0$ there exists $\varepsilon_n\downarrow 0$ such that
\begin{equation}\label{lem1.eq1}
\sup_{x\in[0,\delta_nc_n]}\left|\frac{\pr(\tau_x>n)}{V(x)\pr(\tau_0>n)}-1\right|\le \varepsilon_n.
\end{equation}
In  addition, the following estimate is valid
for all $x\ge 0$,
\begin{equation}\label{lem1.eq2}
\pr(\tau_x>n)\le  C_0V(\min\{x, c_n\})\pr(\tau_0>n).
\end{equation}
\end{lemma}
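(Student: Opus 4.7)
The plan rests on two ingredients: the classical harmonic identity
\begin{equation*}
V(x)=\e\bigl[V(S_n+x);\tau_x>n\bigr],\qquad n\ge 1,\ x\ge 0,
\end{equation*}
expressing that $V$ is (up to a multiplicative constant) the unique positive harmonic function for the random walk killed when $S_n+x\le 0$; and Doney's conditional limit theorem, which states that $S_n/c_n$ under $\pr(\cdot\mid\tau_x>n)$ converges in distribution to $M_{\alpha,\beta}(1)$ for each fixed $x\ge 0$.

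I would first establish the upper bound \eqref{lem1.eq2}. When $x\ge c_n$ it is immediate, since $\pr(\tau_x>n)\le 1$ and \eqref{V-tau} implies $V(c_n)\pr(\tau_0>n)\ge A/2$ for all large $n$. When $x<c_n$, apply the harmonic identity and retain only the contribution from the event $\{S_n\ge\kappa c_n\}$ for a small fixed $\kappa>0$:
\begin{equation*}
V(x)\ge V(\kappa c_n)\,\pr\bigl(\tau_x>n,\,S_n\ge\kappa c_n\bigr).
\end{equation*}
A Markov splitting at $\lfloor n/2\rfloor$, together with the functional convergence of the rescaled walk to a non-degenerate stable process, shows that $\pr(\tau_x>n,\,S_n\ge\kappa c_n)\ge c_1\,\pr(\tau_x>n)$ uniformly in $x\in[0,c_n]$. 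Combining with the regular-variation estimate $V(\kappa c_n)\asymp V(c_n)$ and the relation $V(c_n)^{-1}\asymp\pr(\tau_0>n)$ from \eqref{V-tau} yields \eqref{lem1.eq2}.

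For the sharp asymptotic \eqref{lem1.eq1}, rewrite the harmonic identity as
\begin{equation*}
\frac{\pr(\tau_x>n)}{V(x)\pr(\tau_0>n)} =\frac{1}{V(c_n)\pr(\tau_0>n)}\cdot\frac{V(c_n)}{\e[V(S_n+x)\mid\tau_x>n]}.
\end{equation*}
By \eqref{V-tau} the first factor tends to $1/A$. For the second, the key assertion is that $S_n/c_n$ under $\pr(\cdot\mid\tau_x>n)$ converges in distribution to $M_{\alpha,\beta}(1)$ uniformly in $x\in[0,\delta_nc_n]$. I would establish this uniformity by a Markov split at a moderately large fixed time $m$: after time $m$ the effective shift $S_m+x$ is $o(c_n)$ on the $c_n$-scale, so on $[m,n]$ Doney's theorem applies with a vanishing perturbation. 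Regular variation of $V$ with index $\alpha(1-\rho)$ then gives $V(S_n+x)/V(c_n)\Rightarrow M_{\alpha,\beta}(1)^{\alpha(1-\rho)}$; promoting weak to $L^1$-convergence and comparing with the case $x=0$ identifies the common limit as $1/A$ and yields \eqref{lem1.eq1}.

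The main obstacle is the uniform integrability of $V(S_n+x)/V(c_n)$ under $\pr(\cdot\mid\tau_x>n)$, jointly in $n$ and $x\in[0,\delta_nc_n]$. The natural bootstrap uses the already-proven \eqref{lem1.eq2}: a tail estimate of the form $\pr(S_n>tc_n\mid\tau_x>n)\le Ct^{-\alpha-\eta}$ for some $\eta>0$ and $t\ge 1$, obtained by conditioning on an intermediate value of the walk and reapplying the upper bound, combined with Potter's bound for the regularly varying $V$, supplies the required control.
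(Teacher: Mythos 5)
The paper does not prove this lemma at all: \eqref{lem1.eq1} is imported verbatim as Corollary~3 of \cite{D12}, and \eqref{lem1.eq2} as Lemma~2.1 of \cite{AGKV}. Your proposal attempts a self-contained derivation, which is a genuinely different (and more ambitious) route, built on the harmonicity identity $V(x)=\e[V(x+S_n);\tau_x>n]$ plus a uniform conditional limit theorem. The algebraic reduction of \eqref{lem1.eq1} to the convergence of $\e[V(x+S_n)\mid\tau_x>n]/V(c_n)$ and the $L^1$-promotion via uniform integrability are sound in outline.

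The concrete gap is a circularity that the sketch papers over. For the upper bound \eqref{lem1.eq2} you assert that Markov splitting ``together with the functional convergence of the rescaled walk'' yields $\pr(\tau_x>n,\,S_n\ge\kappa c_n)\ge c_1\pr(\tau_x>n)$ uniformly in $x\in[0,c_n]$. The unconditional invariance principle does not give this: what you need is a lower bound on the \emph{conditional} probability $\pr(S_n\ge\kappa c_n\mid\tau_x>n)$, uniformly for $x$ up to scale $c_n$, and that is a conditional limit theorem for the meander started at level $x/c_n$ --- a statement at least as deep as the lemma itself. The same issue recurs in the proof of \eqref{lem1.eq1}: you invoke uniform (in $x\le\delta_nc_n$) convergence of the law of $S_n/c_n$ under $\pr(\cdot\mid\tau_x>n)$, but in the paper this uniform version (Lemma~\ref{lem:meander}) is obtained from the Caravenna--Chaumont Doob-transform theorem by an ``inverse change of measure,'' a step that itself uses the asymptotics $\pr(\tau_x>n)\sim V(x)\pr(\tau_0>n)$ uniformly in $x$ --- i.e.\ precisely \eqref{lem1.eq1}. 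Unless you can supply an independent proof of the uniform conditional CLT (or of the $\pr(\tau_x>n,\,S_n\ge\kappa c_n)$ lower bound) that does not feed back through Lemma~\ref{lem:const_boundary}, the argument chases its tail. The safest course is to do what the authors do and cite \cite{D12} and \cite{AGKV}, or to reproduce their ladder-height-based proofs, which avoid conditional limit theorems entirely.
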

The first statement \eqref{lem1.eq1} is Corollary~3 in \cite{D12}, and \eqref{lem1.eq2} is proven in
Lemma~2.1 in \cite{AGKV}.

Let $\tau^+$ denote the first ascending ladder epoch, that is,
$$
\tau^+:=\min\{n\geq1:S_n>0\}.
$$
Let $H(x)$ denote the renewal function of strict ascending ladder epochs.
Then, similar to \eqref{V-tau}, one has
\begin{equation}
\label{H-tau}
\lim_{n\to\infty} H(c_n)\pr(\tau^+>n)=:A^+\in(0,\infty).
\end{equation}
Define also $\tau^+_x:=\min\{n\geq1: S_n>x\}$. Then, similar to \eqref{lem1.eq2},
\begin{equation}
\label{tau+}
\pr(\tau^+_x>n)\le C_0 H(\min\{x,c_n\})\pr(\tau^+>n),\quad x\ge0.
\end{equation}

Combining \eqref{V-tau} and \eqref{H-tau}, and using the well-known relation
$$
\pr(\tau^+>n)\pr(\tau_0>n)\sim n^{-1},
$$
we conclude that
\begin{equation}
\label{VH}
\lim_{n\to\infty}\frac{V(c_n)H(c_n)}{n}\in(0,\infty).
\end{equation}

\begin{lemma}
\label{lem:meander}
Let $f$ be a continuous functional on $D[0,1]$. Then, for every $\delta_n\to0$,
$$
\sup_{x\le \delta_n c_n}\Big|\e[f(s_n)|\tau_x>n]-\e f(M_{\alpha,\beta})\Big|\to0.
$$
\end{lemma}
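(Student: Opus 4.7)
The plan is to upgrade Doney's meander convergence for a fixed boundary $x$ (cited before the statement) to a statement uniform in $x\in[0,\delta_n c_n]$. Since weak convergence on $D[0,1]$ is equivalent to tightness together with convergence of finite-dimensional distributions at continuity points of the limit, the task reduces to establishing both uniformly in $x$. The key quantitative inputs are the uniform asymptotic \eqref{lem1.eq1} and the global bound \eqref{lem1.eq2} from Lemma~\ref{lem:const_boundary}.

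For the finite-dimensional convergence, fix $0<t_1<\cdots<t_k\le 1$ and a bounded continuous $F\colon\mathbb{R}^k\to\mathbb{R}$. Applying the Markov property of $\{S_n\}$ at time $[nt_k]$ gives
\begin{equation*}
\e\bigl[F(s_n(t_1),\ldots,s_n(t_k));\,\tau_x>n\bigr]
=\e\bigl[F(s_n(t_1),\ldots,s_n(t_k))\,\pr\bigl(\tau'_{x+S_{[nt_k]}}>n-[nt_k]\bigr);\,\tau_x>[nt_k]\bigr],
\end{equation*}
where $\tau'$ denotes the first-passage time of an independent copy of the walk. On the event $\tau_x>[nt_k]$ the quantity $y:=S_{[nt_k]}+x$ is positive. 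Splitting this event according to whether $y\le Ac_n$ for some large constant $A$, the first regime is handled by \eqref{lem1.eq1} and the second by \eqref{lem1.eq2} combined with the regular variation of $V$: in both cases the inner probability equals, up to a factor $1+o(1)$ uniformly in $x\in[0,\delta_n c_n]$, $V(y)\pr(\tau_0>n-[nt_k])$. Dividing by $\pr(\tau_x>n)\sim V(x)\pr(\tau_0>n)$ and iterating the same Markov decomposition at $[nt_{k-1}],\ldots,[nt_1]$, the ratio converges uniformly in $x$ to the $k$-dimensional marginal of the stable process killed at $0$ and reweighted by its $V$-harmonic function, which is the stable meander $M_{\alpha,\beta}$.

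Uniform tightness in $x\in[0,\delta_n c_n]$ is obtained in a similar spirit: using the Markov property and the bound \eqref{lem1.eq2}, the modulus-of-continuity estimate for $s_n$ under the conditional measure reduces to a corresponding estimate on oscillations of the unconditioned rescaled walk, which is controlled by the convergence of $\{S_{[nt]}/c_n\}$ to the stable process. The main obstacle is the quantitative control of the ratio $V(x+y)/V(y)$ when $y$ is of order comparable to $x$: regular variation of $V$ handles the regime $y\gg x$, but for $y=O(x)$ one has to use \eqref{lem1.eq2} directly together with the fact that, under the conditional measure, the probability that $S_{[nt]}$ is of smaller order than $c_n$ vanishes for any fixed $t>0$. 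A secondary subtlety is that all statements imported from Doney's theorem for a fixed $x$ must be quantitative enough to allow $x$ to vary with $n$, which is precisely the role of the uniformity built into \eqref{lem1.eq1} and \eqref{lem1.eq2}.
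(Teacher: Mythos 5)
Your approach is genuinely different from the paper's. The paper cites Caravenna--Chaumont's convergence of the Doob $h$-transform of $s_n$ to the stable process conditioned to stay positive, performs an inverse change of measure (dividing out the harmonic function) to recover meander convergence for any single sequence $x_n\le\delta_n c_n$, and then gets the claimed uniformity by the usual contradiction/subsequence argument. You instead try a direct finite-dimensional-distributions-plus-tightness argument uniform in $x$. That could in principle work, but as written there is a real gap.

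The decisive error is in the treatment of the inner probability $\pr(\tau'_{y}>n-[nt_k])$ with $y=x+S_{[nt_k]}$. Under the conditional law, $S_{[nt_k]}$ is typically of order $c_n$, so $y\asymp c_n\asymp c_{n-[nt_k]}$. The asymptotic $\pr(\tau_y>m)\sim V(y)\pr(\tau_0>m)$, which is what \eqref{lem1.eq1} encodes, is valid only for $y=o(c_m)$; for $y/c_m\to z>0$ the left side tends to the strictly positive constant $\pr(\inf_{[0,1]}Y_\alpha>-z)$ while $V(y)\pr(\tau_0>m)\to Az^{\alpha(1-\rho)}$, and these are not equal (already for Brownian motion: $2\Phi(z)-1\ne\sqrt{2/\pi}\,z$). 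So ``the inner probability equals, up to a factor $1+o(1)$, $V(y)\pr(\tau_0>n-[nt_k])$'' is false on the bulk of the event you are integrating over, and \eqref{lem1.eq2} only gives an upper bound, not the matching asymptotic you need. A correct fdd argument would instead show that this factor converges (uniformly in small $x$) to $\pr_{y/c_n}(\inf_{[0,1-t_k]}Y_\alpha>0)$ and that the remaining finite-dimensional piece converges to the corresponding killed transition densities; the $V$-harmonic reweighting you invoke is the description of the Doob transform, not of the meander, and identifying the limit as $M_{\alpha,\beta}$ requires precisely the change-of-measure step that the paper performs explicitly.

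A secondary, smaller issue is the tightness step. Conditioning on $\{\tau_x>n\}$ does not let you transfer modulus-of-continuity estimates from the unconditioned walk by a naive inequality, since dividing by $\pr(\tau_x>n)\to0$ destroys any such bound. The standard route (separate treatment of a short initial time interval where the path is close to the boundary, then coupling with an unconditioned walk started from a macroscopic height) is what you would actually have to carry out, and it is more than the one-line reduction you suggest.
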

\begin{proof}
Let $x_n$ be a sequence satisfying $x_n\le \delta_n c_n$. Caravenna and Chaumont
have shown in \cite{CC08} that the Doob transform of $s_n$ converges to a stable
process conditioned to stay positive at all times. Performing the inverse change
of measure one can easily obtain the convergence
$$
\e[f(s_n)|\tau_{x_n}>n]\to\e f(M_{\alpha,\beta}).
$$
The desired uniformity follows from the standard contradiction argument.
\end{proof}

%%%%%%%%%%%%%%%%%%%%%%%%%%%%%%%%%%%%%%%%%%%%%%%%%%%%%%%%%%%%%%%%%%%%%%%%%%%%%%%%%%%%%%%%%%%%%%%%%%%%%%%%
%%%%%%%%%%%%%%%%%%%%%%%%%%%%%%%%%%%%%%%%%%%%%%%%%%%%%%%%%%%%%%%%%%%%%%%%%%%%%%%%%%%%%%%%%%%%%%%%%%%%%%%%
\section{Proof of Theorem~\ref{T1}}
%%%%%%%%%%%%%%%%%%%%%%%%%%%%%%%%%%%%%%%%%%%%%%%%%%%%%%%%%%%%%%%%%%%%%%%%%%%%%%%%%%%%%%%%%%%%%%%%%%%%%%%%
\subsection{Preliminary estimates}
Define
$$
G_n:=\max_{k\leq n}|g_k|,\quad Z_n:=S_n-g_n
$$
and
$$
Q_{k,n}(y):=\pr\left(y+\min_{k\le j\le n}(Z_j-Z_k)>0\right).
$$
\begin{lemma}
\label{lem:Q-bound}
Fix some sequence $\delta_n\downarrow0$ such that $\delta_nc_n$ increases.
Then, for all $y\ge0,$
\begin{align}
\label{Q-bound.1}
\nonumber
&\max_{k\le n/2}\left|\frac{Q_{k,n}(y)}{\pr(\tau_0>n-k)}-V(y)\right|\\
&\hspace{1cm}\le
\oo{\varepsilon}_{n}V(y)+2(1+C_0+\oo{\varepsilon}_1)V(G_n)+
2C_0V(y)\mathbb{I}\{y>\oo{\delta}_nc_n-2G_n\},
\end{align}
where
$$
\oo{\varepsilon}_n:=\max_{k\in[n/2,n]}\varepsilon_k,\quad
\oo{\delta}_n:=\frac{\min_{k\in[n/2,n]}\delta_k c_k}{c_n}
$$
and $\varepsilon_n$ is taken from \eqref{lem1.eq1}.
\end{lemma}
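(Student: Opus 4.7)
The plan is to sandwich $Q_{k,n}(y)$ between two constant-boundary exit probabilities and then invoke Lemma~\ref{lem:const_boundary}. Fix $k\le n/2$ and put $m:=n-k\ge n/2$. By the Markov property at time $k$, the process $\{S_{k+i}-S_k\}_{i\ge0}$ is an independent copy of the walk started at $0$. Writing $Z_j-Z_k=(S_j-S_k)-(g_j-g_k)$ and using $|g_j-g_k|\le 2G_n$ uniformly for $j,k\le n$, one obtains the sandwich
$$
\pr(\tau_{y-2G_n}>m)\le Q_{k,n}(y)\le\pr(\tau_{y+2G_n}>m),
$$
the left-hand side being interpreted as $0$ when $y<2G_n$.

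For the upper bound, I would split on whether $y+2G_n\le\oo\delta_n c_n\,(\le\delta_m c_m)$ or not. In the small regime, \eqref{lem1.eq1} together with the bound $\ee_m\le\oo\ee_n$ yields $\pr(\tau_{y+2G_n}>m)\le(1+\oo\ee_n)V(y+2G_n)\pr(\tau_0>m)$. In the complementary regime I fall back on the universal estimate \eqref{lem1.eq2}, which costs only the multiplicative constant $C_0$. The standard subadditivity of the renewal function $V$ then gives $V(y+2G_n)\le V(y)+2V(G_n)$, and combining the two cases and subtracting $V(y)$ produces the upper half of \eqref{Q-bound.1}.

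The lower bound is treated symmetrically. In the regime $y\ge 2G_n$ with $y-2G_n\le\oo\delta_n c_n$, \eqref{lem1.eq1} together with the subadditivity rearrangement $V(y-2G_n)\ge V(y)-2V(G_n)$ gives $Q_{k,n}(y)\ge(1-\oo\ee_n)(V(y)-2V(G_n))\pr(\tau_0>m)$. In the complementary regimes ($y<2G_n$, or $y-2G_n>\oo\delta_n c_n$) I use only the trivial lower bound $Q_{k,n}(y)\ge 0$, controlling the surviving $V(y)$ either by $V(2G_n)\le 2V(G_n)$ or by the indicator term. Collecting contributions and using $\oo\ee_n\le\oo\ee_1$ (since the sequence $\ee_n$ of Lemma~\ref{lem:const_boundary} can be taken decreasing) to absorb the small constant in front of $V(G_n)$ yields precisely \eqref{Q-bound.1}.

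The only genuine obstacle is the large-$y$ regime $y+2G_n>\oo\delta_n c_n$, in which Lemma~\ref{lem:const_boundary}'s sharp asymptotic is unavailable and a direct estimate via \eqref{lem1.eq2} loses a full multiplicative factor $C_0$ in front of $V(y)$. The remedy built into the statement is to quarantine this loss inside the indicator correction $2C_0 V(y)\mathbb{I}\{y>\oo\delta_n c_n-2G_n\}$; under the standing hypothesis $g_n=o(c_n)$ of Theorem~\ref{T1} one has $\oo\delta_n c_n/G_n\to\infty$, so this indicator term is active only for very large $y$ and plays a negligible role in the subsequent analysis.
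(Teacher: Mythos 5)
Your argument reproduces the paper's proof in all essentials: the same sandwich $\pr(\tau_{y-2G_n}>n-k)\le Q_{k,n}(y)\le\pr(\tau_{y+2G_n}>n-k)$, the same case split according to whether the shifted level falls below $\oo{\delta}_nc_n$ (where \eqref{lem1.eq1} applies, noting $\oo{\delta}_nc_n\le\delta_{n-k}c_{n-k}$ for $k\le n/2$ by monotonicity of $\delta_nc_n$) or not (where one falls back on \eqref{lem1.eq2} at the price of $C_0$), followed by subadditivity of $V$ and absorption of the small-$y$ and large-$y$ remainders into the $V(G_n)$ and indicator terms. The only cosmetic difference is that you split the lower bound at $y-2G_n\le\oo{\delta}_nc_n$ while the paper uses $y+2G_n\le\oo{\delta}_nc_n$ in both directions; this changes nothing, since the discrepancy is covered by the indicator on $\{y>\oo{\delta}_nc_n-2G_n\}$.
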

\begin{proof}
It is immediate from the definition of $Q_{k,n}$ that
\begin{eqnarray*}
\pr\left(y-2G_n+\min_{j\leq n-k}S_k>0\right)
\le Q_{k,n}(y)\le
\pr\left(y+2G_n+\min_{j\leq n-k}S_k>0\right)
\end{eqnarray*}
If $y+2G_n\le \oo{\delta}_nc_n$ then $y+2G_n\le \delta_{n-k}c_{n-k}$ for all $k\le n/2$.
Therefore, by \eqref{lem1.eq1},
$$
\pr\left(y+2G_n+\min_{j\leq n-k}S_k>0\right)
\leq(1+\oo{\varepsilon}_n)V(y+2G_n)\pr(\tau_0>n-k)
$$
for every $y\le \oo{\delta}_nc_n-2G_n$. Using now the subadditivity of $V$, we obtain
\begin{align*}
&\frac{\pr\left(y+2G_n+\min_{j\leq n-k}S_k>0\right)}{\pr(\tau_0>n-k)}\\
&\hspace{1cm}\leq(1+\oo{\varepsilon}_n)V(y)+2(1+\oo{\varepsilon}_n)V(G_n),\quad  y\le \oo{\delta}_nc_n-2G_n.
\end{align*}
If $y> \oo{\delta}_nc_n-2G_n$ then, using \eqref{lem1.eq2} and the subadditivity of $V$, we have
$$
\frac{\pr\left(y+2G_n+\min_{j\leq n-k}S_k>0\right)}{\pr(\tau_0>n-k)}
\leq C_0V(y)+2C_0V(G_n),\quad y> \oo{\delta}_nc_n-2G_n.
$$
As a result we have
\begin{align}
\label{Q1}\nonumber
&\frac{Q_{k,n}(y)}{\pr(\tau_0>n-k)}\\
&\hspace{1cm}\le (1+\oo{\varepsilon}_n)V(y)+2(1+C_0+\oo{\varepsilon}_n)V(G_n)
+C_0V(y)\mathbb{I}\{y>\oo{\delta}_nc_n-2G_n\}.
\end{align}
If $y\le \oo{\delta}_nc_n-2G_n$ then it follows from \eqref{lem1.eq1} that
$$
\pr\left(y-2G_n+\min_{j\leq n-k}S_k>0\right)
\geq(1-\oo{\varepsilon}_n)V(y-2G_n)\pr(\tau_0>n-k).
$$
Therefore, due to the subadditivity of $V$,
\begin{align*}
\frac{Q_{k,n}(y)}{\pr(\tau_0>n-k)}&\geq
\frac{\pr\left(y-2G_n+\min_{j\leq n-k}S_k>0\right)}{\pr(\tau_0>n-k)}\\
&\geq(1-\oo{\varepsilon}_n)V(y)-2V(G_n)-V(y)\mathbb{I}\{y>\oo{\delta}_nc_n-2G_n\}.
\end{align*}
Combining this with \eqref{Q1}, we obtain \eqref{Q-bound.1}.
\end{proof}
Define
$$
Z_n^*:=V(Z_n)\mathbb{I}\{T_g>n\}.
$$
\begin{lemma}
\label{lem:Z-bound}
For every stopping time $\nu$,
$$
\left|\e Z^*_{\nu\wedge n}-\e Z^*_n\right|\leq 2V(G_n)\pr(T_g>\nu\wedge n), \quad n\geq1.
$$
\end{lemma}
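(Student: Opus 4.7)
The plan is to compare $Z^*$ with a closely related non-negative supermartingale built from the renewal function $V$ and to exploit optional stopping together with a pointwise sandwich. First I introduce the auxiliary process
\[
A_k := V(S_k + G_n)\mathbb{I}\{T_g > k\}, \qquad 0 \le k \le n.
\]
Since $g_k \ge -G_n$ for $k \le n$, one has $S_k + G_n > 0$ on $\{T_g > k\}$, so $V$ is evaluated at a non-negative argument. Using the defining harmonic identity $\e[V(y+X)\mathbb{I}\{y+X>0\}] = V(y)$ for $y \ge 0$ together with $g_{k+1}+G_n \ge 0$, I would verify that $A$ is a supermartingale whose Doob compensator at step $k+1$ equals
\[
A_k - \e[A_{k+1}|\mathcal{F}_k] = \e[V(S_{k+1}+G_n)\mathbb{I}\{-G_n < S_{k+1} \le g_{k+1}\}|\mathcal{F}_k]\mathbb{I}\{T_g>k\}.
\]

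On the integration event $V(S_{k+1}+G_n) \le V(g_{k+1}+G_n) \le V(2G_n) \le 2V(G_n)$ by monotonicity and subadditivity of $V$, while $\{-G_n < S_{k+1}\le g_{k+1}\}\cap\{T_g>k\}\subseteq\{T_g = k+1\}$. Summing through the Doob decomposition and using $\mathbb{I}\{\nu\le k\}\in\mathcal{F}_k$ gives
\[
0 \le \e A_{\nu \wedge n} - \e A_n \le 2V(G_n)\pr(\nu < T_g \le n) \le 2V(G_n)\pr(T_g > \nu \wedge n).
\]
Subadditivity of $V$ also yields the pointwise sandwich $0 \le A_k - Z^*_k \le V(G_n + g_k)\mathbb{I}\{T_g > k\} \le 2V(G_n)\mathbb{I}\{T_g > k\}$, since on $\{T_g>k\}$ we may write $S_k+G_n=(S_k-g_k)+(G_n+g_k)$ with both summands non-negative. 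Combining these through
\[
\e Z^*_{\nu \wedge n} - \e Z^*_n = (\e A_{\nu \wedge n} - \e A_n) + \e[A_n - Z^*_n] - \e[A_{\nu \wedge n} - Z^*_{\nu \wedge n}],
\]
together with the inclusion $\{T_g > n\} \subseteq \{T_g > \nu \wedge n\}$ to uniformize the probabilities, yields the claimed estimate up to an absolute constant comparable to $2$.

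The main obstacle is the accurate control of the supermartingale defect caused by boundary crossings. The harmonic identity provides an exact formula for the defect in terms of the overshoot below $-G_n$; the observation that the contributing overshoots satisfy $S_{T_g}+G_n \le g_{T_g}+G_n \le 2G_n$, combined with subadditivity of $V$, is what isolates the $V(G_n)$ factor. A related technical point is that although $A$ itself is only a supermartingale, its compensator is supported exactly on the event that the walk crosses $g_\cdot$ without undershooting $-G_n$, which is what allows the one-sided control in the required form.
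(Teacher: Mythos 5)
Your proposal is correct in its main ideas and uses the same two ingredients as the paper — the harmonicity of $V$ and the bound $|g_j-g_k|\le 2G_n$ for $j,k\le n$ — but packages them differently. The paper applies the Markov property at time $\nu\wedge n$ directly, sandwiches $\min_{k\le j\le n}(Z_j-Z_k)$ between $\min_{j\le n-k}S_j\mp 2G_n$, and invokes the harmonic identity $\e[V(y+S_m);y+\min_{j\le m}S_j>0]=V(y)$ at $y=z\pm 2G_n$; subadditivity of $V$ then yields both $\e Z^*_n\le \e Z^*_{\nu\wedge n}+2V(G_n)\pr(T_g>\nu\wedge n)$ and the matching lower bound $\e Z^*_n\ge \e Z^*_{\nu\wedge n}-2V(G_n)\pr(T_g>\nu\wedge n)$. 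You instead manufacture a genuine supermartingale $A_k=V(S_k+G_n)\mathbb{I}\{T_g>k\}$, compute its Doob defect (your computation is correct: on $\{T_g>k\}$ the defect is supported on $\{-G_n<S_{k+1}\le g_{k+1}\}$, where $V(S_{k+1}+G_n)\le V(2G_n)\le 2V(G_n)$), and then combine optional stopping for $A$ with the pointwise sandwich $0\le A_k-Z^*_k\le 2V(G_n)\mathbb{I}\{T_g>k\}$. Conceptually this is close to the paper; the supermartingale reformulation isolates cleanly where the $V(G_n)$ factor comes from.

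The only real discrepancy is quantitative. Because your comparison process sits entirely on one side of $Z^*$ (namely $A_k\ge Z^*_k$), assembling the three pieces in your final identity gives a one-sided bound $\e Z^*_{\nu\wedge n}-\e Z^*_n\ge -2V(G_n)\pr(T_g>\nu\wedge n)$ but an upper bound of $2V(G_n)\pr(T_g>\nu\wedge n)+2V(G_n)\pr(T_g>n)$, i.e.\ $4V(G_n)\pr(T_g>\nu\wedge n)$ rather than $2$. The paper avoids this doubling by sandwiching the post-$\nu\wedge n$ piece symmetrically around $V(Z_{\nu\wedge n})$ (shifting by $\pm 2G_n$ and applying harmonicity to each shift) rather than around $V(S_k+G_n)$. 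You correctly flag that you only get the stated bound up to an absolute constant. That is harmless for every subsequent use of the lemma in the paper, so the proof is serviceable, but it does not literally reproduce the constant $2$ in the statement.
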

\begin{proof}
By the Markov property at time $\nu\wedge n$,
\begin{align*}
\e Z^*_n&=\e[V(S_n-g_n);T_g>n]\\
&=\sum_{k=1}^n\int_0^\infty\pr(Z_k\in dz;T_g>k,\nu\wedge n=k)  \\
&\hspace{3cm}\times\e\left[V(z+Z_n-Z_k);z+\min_{k\le j\le n}Z_j-Z_k>0\right].
\end{align*}
Then, we have the following estimates from above
\begin{align*}
  \e Z^*_n&\le\sum_{k=1}^n\int_0^\infty\pr(Z_k\in dz;T_g>k,\nu\wedge n=k)\\
&\hspace{3cm}\times\e\left[V(z+2G_n+S_{n-k});z+2G_n+\min_{j\le n-k}S_j>0\right]
\end{align*}
and below
\begin{align*}
\e Z^*_n&=\e[V(S_n-g_n);T_g>n]\\
&\ge\sum_{k=1}^n\int_0^\infty\pr(Z_k\in dz;T_g>k, \nu\wedge n=k)\\
&\hspace{3cm}\times\e\left[V(z-2G_n+S_{n-k});z-2G_n+\min_{j\le n-k}S_j>0\right].
\end{align*}
Then, using the harmonicity and the subadditivity of $V(x)$, we get
\begin{align*}
\e Z^*_n&\le \e[V(Z_{\nu\wedge n}+2G_n);T_g>\nu\wedge n]\\
&\le \e Z^*_{\nu\wedge n}+2V(G_n)\pr(T_g>\nu\wedge n).
\end{align*}
and
\begin{align*}
\e Z^*_n&\ge \e[V(Z_{\nu\wedge n}-2G_n);T_g>\nu\wedge n]\\
&\ge \e Z^*_{\nu\wedge n}-2V(G_n)\pr(T_g>\nu\wedge n).
\end{align*}
Thus, the proof is complete.
\end{proof}

Define the stopping times
\begin{equation}
\label{def:nu}
\nu(h):=\min\{k\geq1: Z_k\geq h\}
\quad\text{and}\quad
\nu_n:=\nu(c_n)\wedge n.
\end{equation}

\begin{lemma}
\label{lem:Ubound}
There exist constants $C_1$ and $C_2$ such that
\begin{equation}
\label{UB1}
\frac{\pr(T_g>n)}{\pr(\tau_0>n)}\le C_1\e Z^*_n
\end{equation}
and
\begin{equation}
\label{UB2}
\frac{\pr(T_g>\nu_n)}{\pr(\tau_0>n)}\le C_2\e Z^*_n.
\end{equation}
for all $n\geq1$.
\end{lemma}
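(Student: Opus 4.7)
My plan is to prove \eqref{UB1} first by induction on $n$, and then deduce \eqref{UB2} from it. Note that \eqref{UB1} would in fact follow from \eqref{UB2} since $\nu_n\le n$ implies $\pr(T_g>n)\le\pr(T_g>\nu_n)$, but it is cleaner to set up the recursion for \eqref{UB1} first.

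For \eqref{UB1}, I fix $k=\lfloor n/2\rfloor$ and apply the Markov property at time $k$ to write $\pr(T_g>n)=\e[Q_{k,n}(Z_k);T_g>k]$. Lemma~\ref{lem:Q-bound} yields the pointwise bound $Q_{k,n}(y)\le C\pr(\tau_0>n-k)(V(y)+V(G_n))$, and the regular variation of $\pr(\tau_0>\cdot)$ gives $\pr(\tau_0>n-k)\le C'\pr(\tau_0>n)$. Taking expectation and invoking Lemma~\ref{lem:Z-bound} to replace $\e Z^*_k$ by $\e Z^*_n$ up to a $V(G_n)\pr(T_g>k)$ error leads to
\[
\pr(T_g>n)\le C''\pr(\tau_0>n)\bigl[\e Z^*_n+V(G_n)\pr(T_g>k)\bigr].
\]
Setting $\rho_n:=\pr(T_g>n)/[\pr(\tau_0>n)\e Z^*_n]$, which is finite for each $n$ because the standing assumption $\pr(T_g>n)>0$ combined with strict positivity of $V$ on $(0,\infty)$ forces $\e Z^*_n>0$, the inductive hypothesis $\rho_k\le C_1$ together with one more use of Lemma~\ref{lem:Z-bound} (yielding $\e Z^*_k\le 2\e Z^*_n$ for large $n$) reduces the estimate to $\rho_n\le C''+C'''C_1 V(G_n)\pr(\tau_0>n/2)$. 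Because $g_n=o(c_n)$, because $V$ is regularly varying with positive index $\alpha(1-\rho)$, and because of \eqref{V-tau}, we have $V(G_n)\pr(\tau_0>n/2)\to 0$. Choosing $C_1\ge 2C''$ and $N$ large enough that the error term is at most $C_1/2$ closes the induction for $n\ge N$; the finitely many values with $n<N$ are absorbed into $C_1$.

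To deduce \eqref{UB2}, I decompose
\[
\pr(T_g>\nu_n)=\pr(T_g>n)+\pr\bigl(T_g>\nu(c_n),\,\nu(c_n)<n\bigr).
\]
On $\{T_g>\nu(c_n),\nu(c_n)<n\}$ we have $Z_{\nu_n}=Z_{\nu(c_n)}\ge c_n$, so $V(Z_{\nu_n})\ge V(c_n)$, which gives $\e Z^*_{\nu_n}\ge V(c_n)\pr(T_g>\nu(c_n),\nu(c_n)<n)$. Combining this with the estimate $\e Z^*_{\nu_n}\le\e Z^*_n+2V(G_n)\pr(T_g>\nu_n)$ from Lemma~\ref{lem:Z-bound} applied at the stopping time $\nu_n$, and using $V(G_n)/V(c_n)\to 0$ to absorb the error onto the left-hand side, yields $\pr(T_g>\nu_n)\le 2\pr(T_g>n)+C\e Z^*_n/V(c_n)$ for large $n$. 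Applying \eqref{UB1} to the first term and using \eqref{V-tau} to replace $1/V(c_n)$ by a bounded multiple of $\pr(\tau_0>n)$ completes the proof, with small $n$ again absorbed into the constant.

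The main obstacle is closing the induction for \eqref{UB1}: the $V(G_n)\pr(T_g>k)$ error term produced by Lemma~\ref{lem:Q-bound} feeds the very quantity we are trying to bound back into the recursion, and only the smallness of $V(G_n)\pr(\tau_0>n/2)$—where $g_n=o(c_n)$, the scaling relation \eqref{V-tau}, and the positive regular-variation index of $V$ all enter decisively—prevents the recursion from diverging.
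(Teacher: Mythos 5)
Your derivation of \eqref{UB2} from \eqref{UB1} matches the paper's argument almost line for line (the paper decomposes on $\{Z_{\nu_n}<c_n\}$ versus $\{Z_{\nu_n}\ge c_n\}$ rather than on $\{\nu(c_n)<n\}$, but this is cosmetic), so that part is fine. Your proof of \eqref{UB1} is genuinely different from the paper's, and it has a gap.

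The paper does not prove \eqref{UB1} by induction at all. It invokes a stochastic-domination inequality, Lemma~24 of \cite{DSW16}, which gives $\pr(S_n\ge x\,|\,T_g>n)\ge\pr(S_n\ge x)$, hence $\e[V(Z_n)\,|\,T_g>n]\ge \e V(Z_n)$. Since $S_n/c_n$ converges to a stable law and $V$ is regularly varying, $\e V(Z_n)\sim V(c_n)\,\e[Y^{\alpha(1-\rho)};Y>0]$, and combining with \eqref{V-tau} yields $\pr(T_g>n)/\pr(\tau_0>n)\le C_1\e Z^*_n$ directly, with no recursion to close.

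The gap in your inductive argument is a hidden circularity in the choice of constants. To pass from $\e Z^*_k$ (with $k=\lfloor n/2\rfloor$) to $\e Z^*_n$ you use Lemma~\ref{lem:Z-bound}, obtaining $\e Z^*_k\le \e Z^*_n + 2V(G_n)\pr(T_g>k)$; converting this to $\e Z^*_k\le 2\e Z^*_n$ requires $2V(G_n)\pr(T_g>k)\le \e Z^*_n$, and the only available control on $\pr(T_g>k)$ is the inductive hypothesis $\pr(T_g>k)\le C_1\pr(\tau_0>k)\e Z^*_k$. Feeding this back in, the condition becomes $2C_1 V(G_n)\pr(\tau_0>k)<\tfrac12$, so the threshold $N$ does depend on $C_1$ after all, contrary to your parenthetical remark that the error condition ``does not involve $C_1$.'' You then set $C_1\ge\max_{n<N}\rho_n$ for the base case, and this $C_1$ feeds back into the requirement on $N$: if $\max_{n<N}\rho_n$ is large, $N$ must grow, which enlarges $\max_{n<N}\rho_n$ further, and the fixed point need not exist. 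The recursion $\rho_n\le C_*(1+a_n\rho_k)/(1-2a_n\rho_k)$ (with $a_n=V(G_n)\pr(\tau_0>k)\to0$) is stable only once $\rho_k$ is already known to be $O(1)$; it does not by itself rule out unbounded $\rho_n$, which is exactly what you are trying to prove. You need an a priori bound such as the stochastic domination of Lemma~24 of \cite{DSW16} (or some other external input) to break the cycle; Lemmas~\ref{lem:Q-bound} and \ref{lem:Z-bound} alone are not enough.
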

\begin{proof}
According to Lemma 24 in \cite{DSW16},
$$
\pr(S_n\ge x|T_g>n)\ge \pr(S_n\geq x),\quad x\in\mathbb{R}.
$$
This implies that
\begin{equation}
\label{UB3}
\frac{\e Z^*_n}{\pr(T_g>n)}=\e[V(Z_n)|T_g>n]\ge \e V(Z_n).
\end{equation}
Since $S_n$ is asymptotically stable and $V(x)$ is regularly varying of index $\alpha(1-\rho)$,
$$
\e V(Z_n)=\e V(S_n-g_n)\sim V(c_n)\e [Y^{\alpha(1-\rho)};Y>0],
$$
where $Y$ is distributed according to the stable law from \eqref{std}.

Combining this with \eqref{UB3}, we obtain
$$
\liminf_{n\to\infty}\frac{\e Z^*_n}{V(c_n)\pr(T_g>n)}\ge \e [Y^{\alpha(1-\rho)};Y>0]>0.
$$
Using now \eqref{V-tau}, we get \eqref{UB1}.

In order to prove \eqref{UB2} we note that
\begin{align*}
\pr(T_g>\nu_n)&=\pr(T_g>\nu_n, Z_{\nu_n}<c_n)+\pr(T_g>\nu_n, Z_{\nu_n}\ge c_n)\\
&\le \pr(T_g>n)+\pr(Z^*_{\nu_n}\geq V(c_n)).
\end{align*}
Applying \eqref{UB1} to the first summand and the Markov inequality to the second summand,
we obtain
\begin{equation}
\label{UB4}
\pr(T_g>\nu_n)\le C_1\e Z^*_n\pr(\tau_0>n)+\frac{\e Z^*_{\nu_n}}{V(c_n)}.
\end{equation}
By Lemma~\ref{lem:Z-bound},
$$
\frac{\e Z^*_{\nu_n}}{V(c_n)}\le \frac{\e Z^*_{n}}{V(c_n)}+\frac{2V(G_n)}{V(c_n)}\pr(T_g>\nu_n).
$$
Substituting this into \eqref{UB4}, we have
$$
\pr(T_g>\nu_n)\le C_1\e Z^*_n\pr(\tau_0>n)+\frac{\e Z^*_{n}}{V(c_n)}+\frac{2V(G_n)}{V(c_n)}\pr(T_g>\nu_n).
$$
Since $G_n=o(c_n)$, $2V(G_n)/V(c_n)<1/2$ for all $n$ sufficiently large. For such values of $n$ we  have
$$
\pr(T_g>\nu_n)\le 2C_1\e Z^*_n\pr(\tau_0>n)+2\frac{\e Z^*_{n}}{V(c_n)},
$$
and \eqref{UB2} follows now from \eqref{V-tau}.
\end{proof}
\begin{lemma}
\label{lem:Z-exp}
Sequences $\e Z^*_n$ and $\e Z^*_{\nu_n}$ are slowly varying and, moreover,
$$
\e Z^*_n\sim\e Z^*_{\nu_n}.
$$
\end{lemma}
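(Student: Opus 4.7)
The plan is to deduce both claims directly from Lemma~\ref{lem:Z-bound}, combined with the exit-time bounds of Lemma~\ref{lem:Ubound} and the regular variation of~$V$. The common principle is that $G_n=o(c_n)$ forces $V(G_n)/V(c_n)\to 0$, so the moving and constant boundaries produce answers differing by a negligible relative amount.

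For the equivalence $\e Z^*_{\nu_n}\sim\e Z^*_n$, I would apply Lemma~\ref{lem:Z-bound} with the stopping time $\nu=\nu(c_n)$, so that $\nu\wedge n=\nu_n$, obtaining
$$
\bigl|\e Z^*_{\nu_n}-\e Z^*_n\bigr|\le 2V(G_n)\pr(T_g>\nu_n).
$$
Bounding $\pr(T_g>\nu_n)$ by \eqref{UB2} and using \eqref{V-tau}, the right-hand side is at most a multiple of $V(G_n)\e Z^*_n/V(c_n)$, which is $o(\e Z^*_n)$ since $V$ is regularly varying of index $\alpha(1-\rho)$ and $G_n=o(c_n)$. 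This yields the required equivalence.

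For the slow variation of $\e Z^*_n$, I fix $\lambda>0$ and put $m=[\lambda n]$. If $m\le n$, I apply Lemma~\ref{lem:Z-bound} with deterministic stopping time $\nu\equiv m$; if $m>n$, I apply it with time horizon $m$ and deterministic stopping time $\nu\equiv n$. In either case one obtains
$$
\bigl|\e Z^*_m-\e Z^*_n\bigr|\le 2V(G_{n\vee m})\pr(T_g>n\wedge m),
$$
and invoking \eqref{UB1} combined with \eqref{V-tau} bounds the right-hand side by a multiple of $V(G_{n\vee m})\e Z^*_{n\wedge m}/V(c_{n\wedge m})$. Since $c_m/c_n$ stays bounded away from $0$ and $\infty$, while $G_k=o(c_k)$ implies $V(G_k)/V(c_k)\to 0$, this is $o(\e Z^*_{n\wedge m})$, so $\e Z^*_{[\lambda n]}/\e Z^*_n\to 1$. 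Hence $\e Z^*_n$ is slowly varying, and the previously established equivalence transfers the same property to $\e Z^*_{\nu_n}$.

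The only real subtlety is bookkeeping: the error term $V(G_\cdot)$ in Lemma~\ref{lem:Z-bound} is attached to the time horizon and not to the stopping time, so when $m>n$ one must apply the lemma with horizon $m$ rather than $n$ (and stopping time $\nu\equiv n$). Once the correct horizon is chosen, the conclusion follows from regular variation of $V$ and $\{c_n\}$ without any further input.
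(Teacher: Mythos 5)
Your argument is correct and follows essentially the paper's route: both deduce everything from Lemma~\ref{lem:Z-bound}, with a deterministic stopping time to get slow variation and with $\nu=\nu(c_n)$ to get $\e Z^*_{\nu_n}\sim\e Z^*_n$, bounding the resulting $\pr(T_g>\cdot)$ factor through Lemma~\ref{lem:Ubound} and then killing the $V(G_\cdot)$ error via \eqref{V-tau} together with $G_n=o(c_n)$ and regular variation. The only difference is one of strength: the paper derives the uniform estimate $\max_{k\in[m(n),n]}|\e Z^*_k/\e Z^*_n-1|\to 0$ for any $m(n)$ with $m(n)/n\to0$ sufficiently slowly, which it needs later to justify \eqref{m(n)-prop}, whereas your fixed-$\lambda$ comparison $\e Z^*_{[\lambda n]}/\e Z^*_n\to1$ proves the lemma as stated but would have to be slightly strengthened before being used in the proof of Theorem~\ref{T1}. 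Your observation about which time horizon to feed into Lemma~\ref{lem:Z-bound} when $m>n$ is the correct bookkeeping and is exactly the point one could get wrong.
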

\begin{proof}
Taking $\nu\equiv k<n$ in Lemma~\ref{lem:Z-bound} and using \eqref{UB1}, we obtain
\begin{align*}
\left|\e Z^*_k-\e Z^*_n\right|
\le 2V(G_n)\pr(T_g>k)\le 2C_1 V(G_n)\e Z^*_k\pr(\tau_0>k).
\end{align*}
Therefore,
\begin{align*}
\max_{k\in[m,n]}\left|\frac{\e Z^*_n}{\e Z^*_k}-1\right|\leq 2C_1V(G_n)\pr(\tau_0>m).
\end{align*}
It follows from the assumption $G_n=o(c_n)$ and \eqref{V-tau} that $V(G_n)=o(1/\pr(\tau_0>n))$.
Recalling that $\pr(\tau_0>n)$ is regularly varying, we infer that $V(G_n)=o(1/\pr(\tau_0>m(n)))$
if $\frac{m(n)}{n}\to0$ sufficiently slow.
Thus,
\begin{align*}
\max_{k\in[m(n),n]}\left|\frac{\e Z^*_k}{\e Z^*_n}-1\right|\to 0
\end{align*}
provided that $\frac{m(n)}{n}$ is bounded from below or goes to zero sufficiently slow.
In particular, the sequence $\e Z^*_n$ is slowly varying.

Taking $\nu=\nu_n$ in Lemma~\ref{lem:Z-bound} and using \eqref{UB2}, we have
\begin{align*}
\left|\e Z^*_{\nu_n}-\e Z^*_n\right|
\le 2V(G_n)\pr(T_g>\nu_n)\le 2C_2 V(G_n)\e Z^*_n\pr(\tau_0>k)=o(\e Z^*_n).
\end{align*}
In other words, $\e Z^*_{\nu_n}\sim\e Z^*_n$. Thus, the proof is finished.
\end{proof}
\begin{lemma}
\label{lem:tail_exp}
For every sequence $A_n$ satisfying $A_n\gg c_n$ we have
$$
\e[Z^*_{\nu_n}; Z_{\nu_n}>A_n]=o\left(\e Z^*_n\right).
$$
\end{lemma}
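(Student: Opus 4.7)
The plan is to show that on $\{Z_{\nu_n}>A_n\}$ with $A_n\gg c_n$ the walk must produce one atypically large increment at the crossing time, and then to control the contribution of such an event by independence at the stopping time. First I would observe that on $\{\nu(c_n)>n\}$ one has $Z_{\nu_n}=Z_n<c_n<A_n$, so only the values $\nu(c_n)\leq n$ contribute and
$$
\e[Z^*_{\nu_n};Z_{\nu_n}>A_n]=\sum_{k=1}^{n}\e[V(Z_k);T_g>k,\nu(c_n)=k,Z_k>A_n].
$$

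Next, on $\{\nu(c_n)=k,Z_k>A_n\}$ (using the convention $Z_0:=0$) one has $Z_{k-1}<c_n$ and $|g_k-g_{k-1}|\leq 2G_n=o(c_n)$. Since $A_n\gg c_n$, the identity $X_k=(Z_k-Z_{k-1})+(g_k-g_{k-1})$ forces $X_k>A_n/2$ for all $n$ large enough; it also yields $Z_k\leq 2X_k$, whence $V(Z_k)\leq CV(X_k)$ by subadditivity. Because $\{T_g>k-1,\nu(c_n)\geq k\}$ lies in $\sigma(X_1,\ldots,X_{k-1})$ while $X_k$ is independent of it, I would factor
$$
\e[V(Z_k);T_g>k,\nu(c_n)=k,Z_k>A_n]\leq C\,\pr(T_g>k-1)\,\e[V(X_1);X_1>A_n/2].
$$

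Summing over $k$, bounding $\pr(T_g>k-1)$ via \eqref{UB1}, and invoking Karamata's theorem (note that $\e Z^*_k\pr(\tau_0>k)$ is regularly varying of index $\rho-1>-1$, since $\e Z^*_k$ is slowly varying by Lemma~\ref{lem:Z-exp}), the sum $\sum_{k=1}^{n}\pr(T_g>k-1)$ is $O(n\,\e Z^*_n\pr(\tau_0>n))$, which by \eqref{V-tau} equals $O(n\,\e Z^*_n/V(c_n))$. This leads to
$$
\e[Z^*_{\nu_n};Z_{\nu_n}>A_n]\leq C\,\frac{n\,\e[V(X_1);X_1>A_n/2]}{V(c_n)}\,\e Z^*_n,
$$
so the claim reduces to showing that the prefactor vanishes in the limit. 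For $\alpha<2$, Karamata applied to the regularly varying (index $-\alpha\rho$) function $V(x)\pr(X_1>x)$, together with $n\pr(|X_1|>c_n)=O(1)$, reduces this to $(c_n/A_n)^{\alpha\rho}\to 0$, which holds since $\alpha\rho>0$ and $A_n\gg c_n$. For $\alpha=2$ I would use $V(x)=O(x)$ together with $\e X_1^2<\infty$ to get $\e[V(X_1);X_1>A_n]=O(\e[X_1^2;|X_1|>A_n]/A_n)=o(1/A_n)$, and combine this with $V(c_n)\asymp c_n$ and $A_nc_n\gg n$.

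The main obstacle will be this last step — the tail bound $n\,\e[V(X_1);X_1>A_n]/V(c_n)\to 0$. It requires a careful use of Karamata when $\alpha<2$, and a distinct argument in the borderline case $\alpha=2$, where $V$ has index one and no polynomial tail for $X_1$ is available, only $\e X_1^2<\infty$.
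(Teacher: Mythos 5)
Your decomposition is essentially the paper's: both arguments sum over the crossing index $k=\nu(c_n)$, observe that on $\{Z_{\nu_n}>A_n\}$ the last increment is of order $A_n$, factor out $\pr(T_g>k-1)$ by independence, and reduce everything (via \eqref{UB1}, slow variation of $\e Z^*_n$, and Karamata for $\sum_k\pr(T_g>k-1)$) to showing $\e[V(X_1);X_1>A_n/2]=o(V(c_n)/n)$. Your intermediate bound $V(Z_k)\le 2V(X_k)$ on the big-jump event is even slightly tighter than the paper's $\e[V(c_n+2G_n+X_1);\ldots]\le \e[V(X_1);X_1>A_n/2]+3V(c_n)\pr(X_1>A_n/2)$, which produces the extra term the paper disposes of separately in \eqref{tail.2}. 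So up to that point you are fine.

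The gap is exactly where you flag it, and it is more serious than you anticipate. First, your Karamata step for $\alpha<2$ treats $V(x)\pr(X_1>x)$ as regularly varying of index $-\alpha\rho$; but $\pr(X_1>x)$ itself need not be regularly varying. When $\beta=-1$ and $\alpha\in(1,2)$ the right tail of $X$ can be arbitrarily light (even compactly supported), so $\pr(X_1>x)$ has no prescribed index. You should work with $\pr(|X_1|>x)$, which \emph{is} regularly varying of index $-\alpha$ for $\alpha<2$, via the bound $\pr(X_1\in dx)\le\pr(|X_1|\in dx)$ on $(0,\infty)$. Second, and more importantly, for $\alpha=2$ you invoke $\e X_1^2<\infty$, but membership in $\mathcal{D}(2,0)$ does \emph{not} imply finite variance — one may have $\e X^2=\infty$ with the truncated second moment merely slowly varying. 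Your separate argument therefore does not cover the whole $\alpha=2$ case, and for $\alpha<2$ your argument silently excludes $\beta=-1$.

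The paper's device is precisely designed to sidestep both issues at once. Write $\theta(dx)=x^2\pr(|X_1|\in dx)$ and $\Theta(x)=\theta((0,x))$; for \emph{every} $(\alpha,\beta)\in\mathcal A$, including $\alpha=2$ with infinite variance, $\Theta$ is regularly varying of index $2-\alpha\ge 0$ (this is the same truncated second moment that defines $c_n$ through $\mu$, and $c_n^{-2}\Theta(c_n)\sim n^{-1}$ by definition). Then
\begin{equation*}
\e\left[V(X_1);X_1>\tfrac{A_n}{2}\right]\le\int_{A_n/2}^\infty\frac{V(x)}{x^2}\,\theta(dx),
\end{equation*}
and since $V(x)/x^2$ has index $\alpha(1-\rho)-2<0$, Karamata gives $\int_{A}^\infty\frac{V(x)}{x^2}\theta(dx)=O\big(\frac{V(A)}{A^2}\Theta(A)\big)$, with $\frac{V(x)}{x^2}\Theta(x)$ of index $-\alpha\rho<0$. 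Combining with $A_n\gg c_n$ and $c_n^{-2}\Theta(c_n)\sim n^{-1}$ yields the required $o(V(c_n)/n)$ uniformly in $\alpha$ and $\beta$, with no case split. If you replace your final Karamata computation by this one, your proof closes.
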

\begin{proof}
Since $V$ is increasing and subadditive, for all $n$ sufficiently large,
\begin{align*}
&\e[Z^*_{\nu_n}; Z_{\nu_n}>A_n]\\
&\hspace{1cm}=\sum_{j=1}^n\int_{g_{j-1}}^{c_n}\pr(S_{j-1}\in dy,T_g>j-1)
              \e[V(y-g_j+X_1);y-g_j+X_1>A_n]\\
&\hspace{1cm}\le\sum_{j=1}^n\pr(T_g>j-1)\e[V(c_n+2G_n+X_1);c_n+2G_n+X_1>A_n]\\
&\hspace{1cm}\le \sum_{j=1}^n\pr(T_g>j-1)
\left(\e\left[V(X_1);X_1>\frac{A_n}{2}\right]+3V(c_n)\pr\left(X_1>\frac{A_n}{2}\right)\right)
\end{align*}
Combining \eqref{UB1}, Lemma~\ref{lem:Z-exp} and the fact that $\pr(\tau_0>j)$
is regularly varying of index $\rho-1\in(-1,0)$, we get
$$
\sum_{j=1}^n\pr(T_g>j-1)\leq 1+C_1\sum_{j=1}^{n-1} \e Z^*_j\pr(\tau_0>j)\leq
C n\e Z^*_n\pr(\tau_0>n).
$$
Therefore,
\begin{align}
\label{tail.1}
\nonumber
&\frac{\e[Z^*_{\nu_n}; Z_{\nu_n}>A_n]}{\e Z^*_n}\\
&\hspace{1cm}\le C n\pr(\tau_0>n)
\left(\e\left[V(X_1);X_1>\frac{A_n}{2}\right]+3V(c_n)\pr\left(X_1>\frac{A_n}{2}\right)\right).
\end{align}
The assumption $A_n\gg c_n$ implies that $\pr(X_1>A_n)=o(n^{-1})$. Consequently,
\begin{equation}
\label{tail.2}
V(c_n)\pr\left(X_1>\frac{A_n}{2}\right)=o\left(\frac{1}{n\pr(\tau_0>n)}\right).
\end{equation}
Furthermore,
\begin{align*}
\e\left[V(X_1);X_1>\frac{A_n}{2}\right]
=\int_{A_n/2}^\infty V(x)\pr(X_1\in dx)
\le \int_{A_n/2}^\infty \frac{V(x)}{x^2}\theta(dx),
\end{align*}
where $\theta(dx):=x^2\pr(|X_1|\in dx)$. If $S_n$ is asymptotically stable then
$\Theta(x):=\theta((0,x))$ is regularly varying of index $2-\alpha$. Since
$V(x)/x^2$ is regularly varying of index $\alpha(1-\rho)-2$, we infer  that
$$
\e\left[V(X_1);X_1>\frac{A_n}{2}\right]
\le C\frac{V(A_n)}{A_n^2}\Theta(A_n)
=o\left(\frac{V(c_n)}{c_n^2}\Theta(c_n)\right),
$$
where the last step follows from the fact that $\frac{V(x)}{x^2}\Theta(x)$
is regularly varying of index $-\alpha\rho<0$. By the definition of $c_n$,
$c_n^{-2}\Theta(c_n)\sim n^{-1}$. Using \eqref{V-tau} once again, we get
\begin{equation}
\label{tail.3}
\e\left[V(X_1);X_1>\frac{A_n}{2}\right]=o\left(\frac{1}{n\pr(\tau_0>n)}\right).
\end{equation}
By combining \eqref{tail.1}--\eqref{tail.3}  we complete the proof.
\end{proof}
%%%%%%%%%%%%%%%%%%%%%%%%%%%%%%%%%%%%%%%%%%%%%%%%%%%%%%%%%%%%%%%%%%%%%%%%%%%%%%%%%%%%%%%%%%%%%%%%%%%%%%%%
%%%%%%%%%%%%%%%%%%%%%%%%%%%%%%%%%%%%%%%%%%%%%%%%%%%%%%%%%%%%%%%%%%%%%%%%%%%%%%%%%%%%%%%%%%%%%%%%%%%%%%%%
\subsection{Proof of Theorem~\ref{T1}}
Let $\{m(n)\}$ be a sequence of natural numbers such that $m(n)\to\infty$ and $m(n)=o(n)$.
By the Markov property,
$$
\pr(T_g>n)=\e [Q_{\nu_{m(n)},n}(Z_{\nu_{m(n)}});T_g>\nu_{m(n)}].
$$
Applying Lemma~\ref{lem:Q-bound} and noting that $\pr(\tau_0>n-k)\sim\pr(\tau_0>n)$ uniformly in
$k\le m(n)$, we get
\begin{align}
\nonumber
\label{T1.2}
&\frac{\pr(T_g>n)}{\pr(\tau_0>n)}=(1+o(1))\e Z^*_{\nu_{m(n)}}
+O\left(V(G_n)\pr(T_g>\nu_{m(n)})\right)\\
&\hspace{4cm}
+O\left(\e[Z^*_{\nu_{m(n)}};Z_{\nu_{m(n)}}>\oo{\delta}_nc_n-G_n]\right).
\end{align}
By \eqref{UB2}, $\pr(T_g>\nu_{m(n)})\le C_2\e Z^*_{m(n)}\pr(\tau_0>m(n))$.
From this estimate and from the fact that $\pr(\tau_0>n)V(G_n)\to0$ we infer that,
for every sequence $\{m(n)\}$ such that $m(n)/n\to0$ sufficiently slow,
\begin{equation}
\label{T1.3}
V(G_n)\pr(T_g>\nu_{m(n)})=o(\e Z^*_{m(n)}).
\end{equation}
For every sequence $m(n)=o(n)$ we can choose $\{\delta_n\}$ satisfying
$\oo{\delta}_nc_n\gg G_n$ and $\oo{\delta}_nc_n\gg c_{m(n)}$. Then by Lemma~\ref{lem:tail_exp},
$$
\e[Z^*_{\nu_{m(n)}};Z_{\nu_{m(n)}}>\oo{\delta}_nc_n-G_n]=o(\e Z^*_{m(n)}).
$$
Plugging this and \eqref{T1.3} into \eqref{T1.2}, we obtain
$$
\frac{\pr(T_g>n)}{\pr(\tau_0>n)}=(1+o(1))\e Z^*_{\nu_{m(n)}}+o(\e Z^*_{m(n)}).
$$
According to Lemma~\ref{lem:Z-exp},
\begin{equation}
\label{m(n)-prop}
\e Z^*_{\nu_{m(n)}}\sim \e Z^*_{m(n)}\sim \e Z^*_{n}
\end{equation}
provided that $m(n)/n\to0$ sufficiently slow. Consequently,
$$
\frac{\pr(T_g>n)}{\pr(\tau_0>n)}\sim \e Z^*_{n}.
$$
Thus, the proof is complete.
%%%%%%%%%%%%%%%%%%%%%%%%%%%%%%%%%%%%%%%%%%%%%%%%%%%%%%%%%%%%%%%%%%%%%%%%%%%%%%%%%%%%%%%%%%%%%%%%%%%%%%%%
%%%%%%%%%%%%%%%%%%%%%%%%%%%%%%%%%%%%%%%%%%%%%%%%%%%%%%%%%%%%%%%%%%%%%%%%%%%%%%%%%%%%%%%%%%%%%%%%%%%%%%%%
\section{Proof of Theorem~\ref{T2}}
\subsection{Technical preparations}
\begin{lemma}
\label{lem:moments}
For any sequence $\{r_n\}$ satisfying $r_n=o(c_n)$ we have
\begin{equation*}
\e[V(S_n+r_n);T_g>n]\sim \e Z_n^*.
\end{equation*}
\end{lemma}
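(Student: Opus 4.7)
The strategy is to reduce to a deterministic pointwise comparison between $V(S_n+r_n)$ and $V(Z_n)$ on the event $\{T_g>n\}$, and then invoke Theorem~\ref{T1}. Set $\epsilon_n:=g_n+r_n$, so that $S_n+r_n=Z_n+\epsilon_n$. The assumption $g_n=o(c_n)$ implies $G_n=o(c_n)$ (since $c_n$ is eventually non-decreasing and tends to infinity), and $r_n=o(c_n)$ by hypothesis, so the shift satisfies $|\epsilon_n|=o(c_n)$.

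\textbf{Step 1 (pointwise bound).} Using the convention $V(x)=0$ for $x<0$ together with the monotonicity and subadditivity of $V$ already exploited in Lemma~\ref{lem:Q-bound}, a short case analysis (treating separately $\epsilon_n\ge 0$; $\epsilon_n<0$ with $Z_n+\epsilon_n\ge 0$; and $\epsilon_n<0$ with $Z_n+\epsilon_n<0$, in which case $Z_n<|\epsilon_n|$ and monotonicity alone suffice) yields, for every $Z_n\ge 0$,
\[
|V(Z_n+\epsilon_n)-V(Z_n)|\le V(|\epsilon_n|).
\]
Taking expectations on $\{T_g>n\}$ gives
\[
\bigl|\e[V(S_n+r_n);T_g>n]-\e Z_n^*\bigr|\le V(|\epsilon_n|)\,\pr(T_g>n).
\]

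\textbf{Step 2 (asymptotic smallness).} By Theorem~\ref{T1} combined with \eqref{V-tau},
\[
\pr(T_g>n)\sim \e Z_n^*\cdot\pr(\tau_0>n)\sim \frac{A\,\e Z_n^*}{V(c_n)},
\]
so the right-hand side of the estimate in Step~1 is at most $A\,\e Z_n^*\cdot V(|\epsilon_n|)/V(c_n)\,(1+o(1))$. Since $V$ is regularly varying with strictly positive index $\alpha(1-\rho)$ and $|\epsilon_n|=o(c_n)$, Potter bounds (or the representation $V(x)=x^{\alpha(1-\rho)}L(x)$ with $L$ slowly varying) give $V(|\epsilon_n|)/V(c_n)\to 0$, whence $\e[V(S_n+r_n);T_g>n]=\e Z_n^*(1+o(1))$, as desired. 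The only mildly delicate point of the argument is the pointwise inequality in Step~1, where the sub-case $Z_n+\epsilon_n<0$ needs the extra observation $Z_n\le|\epsilon_n|$; once this is in hand, the rest is an immediate combination of Theorem~\ref{T1} with regular variation.
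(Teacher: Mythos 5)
Your proof is correct and takes essentially the same route as the paper: both reduce to the pointwise bound $|V(S_n+r_n)-V(Z_n)|\le V(|g_n+r_n|)$ via subadditivity of $V$, then combine Theorem~\ref{T1} with \eqref{V-tau} and the regular variation of $V$ (positive index) to show $V(|g_n+r_n|)\pr(\tau_0>n)\to 0$. The only difference is cosmetic: the paper simply invokes subadditivity for all real $x,y$, while you spell out the case analysis for negative arguments, which adds nothing but also costs nothing.
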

\begin{proof}
By the subadditivity of $V(x)$,
$$
|V(x+y)-V(x)|\le V(|y|),\quad x,y\in\mathbb{R}.
$$
Therefore,
\begin{align*}
&|\e[V(S_n+r_n);T_g>n]-\e Z_n^*|\\
&\hspace{2cm}=|\e[V(S_n+r_n);T_g>n]-\e[V(S_n-g_n);T_g>n]|\\
&\hspace{2cm}\le V(|r_n+g_n|)\pr(T_g>n)
\end{align*}
According to Theorem~\ref{T1}, $\pr(T_g>n)\sim \e Z_n^*\pr(\tau_0>n)$.
Therefore,
\begin{align*}
\frac{\e[V(S_n+r_n);T_g>n]}{\e Z^*_n}-1=O\big(V(|r_n+g_n|)\pr(\tau_0>n)\big).
\end{align*}
Recalling that $|r_n+g_n|=o(c_n)$ and taking into account \eqref{V-tau}, we conclude
that $V(|r_n+g_n|)\pr(\tau_0>n)$ converges to zero. This completes the proof.
\end{proof}

\begin{lemma}
\label{lem:local}
Under the conditions of Theorem~\ref{T1} we have
$$
\pr(S_n\in(x,x+1],T_g>n)=O\left(\frac{H(\min\{x+G_n,c_n\})}{nc_n}\e Z^*_n\right)
$$
uniformly in $x$.
\end{lemma}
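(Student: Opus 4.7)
The plan is to decompose $[0,n]$ at the midpoint by the Markov property, absorbing the first half into the tail bound of Theorem~\ref{T1} and handling the second half by a time-reversal argument that converts the boundary-crossing condition into a one-sided maximum constraint. Setting $m:=\lfloor n/2\rfloor$, the Markov property gives
\begin{equation*}
\pr(S_n\in(x,x+1],T_g>n)=\int \pr(S_m\in dy,T_g>m)\,P_2(y),
\end{equation*}
where
\begin{equation*}
P_2(y):=\pr\bigl(y+S'_{n-m}\in(x,x+1],\,y+S'_k>g_{m+k}\text{ for }0<k\le n-m\bigr)
\end{equation*}
and $S'$ is an independent copy of the walk. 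The aim is to show that $P_2(y)$ is bounded, uniformly in $y$, by $CH(\min\{x+G_n,c_n\})/(c_n H(c_n))$.

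The key manipulation is the time reversal $\hat{S}_j:=S'_{n-m}-S'_{n-m-j}$; since the increments of $S'$ are i.i.d., $\hat{S}$ has the same law as $S$. Under this substitution the condition $y+S'_k>g_{m+k}$ becomes $\hat{S}_j<\hat{S}_{n-m}+y-g_{n-j}$ for $0\le j<n-m$, while the endpoint requirement reads $\hat{S}_{n-m}\in(x-y,x+1-y]$. Using $\hat{S}_{n-m}\le x+1-y$ and $-g_{n-j}\le G_n$, every path in this event satisfies $\max_{0\le j<n-m}\hat{S}_j<x+1+G_n$, so
\begin{equation*}
P_2(y)\le \pr\Bigl(\hat{S}_{n-m}\in(x-y,x+1-y],\,\max_{0\le j<n-m}\hat{S}_j<x+1+G_n\Bigr).
\end{equation*}
Applying the Markov property once more at $\lfloor n/4\rfloor$, I would bound the marginal on the later quarter by the Gnedenko concentration estimate $\sup_w\pr(S_{\lfloor n/4\rfloor}\in(w,w+1])=O(1/c_n)$ for asymptotically stable walks, and the maximum on the earlier quarter by \eqref{tau+}; combined with \eqref{H-tau} and the regular variation of $H$ and of $c_n$, this yields $P_2(y)=O\bigl(H(\min\{x+G_n,c_n\})/(c_n H(c_n))\bigr)$ uniformly in $y$.

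Integrating over $y$ then produces $\pr(T_g>m)\cdot O\bigl(H(\min\{x+G_n,c_n\})/(c_n H(c_n))\bigr)$. From Theorem~\ref{T1}, Lemma~\ref{lem:Z-exp}, \eqref{V-tau}, and the regular variation of $\pr(\tau_0>\cdot)$, one has $\pr(T_g>m)\asymp \e Z^*_n/V(c_n)$, while \eqref{VH} provides $V(c_n)H(c_n)\asymp n$; multiplying these factors together yields the stated bound. The delicate point is that the time reversal turns the one-sided boundary condition into a supremum event governed by $\tau^+$, so that $H$ (rather than $V$) emerges naturally through \eqref{tau+}; this is exactly what the i.i.d.\ hypothesis permits, since the reversed walk is distributionally identical to the original.
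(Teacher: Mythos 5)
Your proposal is correct and follows essentially the same route as the paper's proof: the Markov decomposition at $m=\lfloor n/2\rfloor$, the time reversal of the second half to turn the boundary-crossing constraint into a one-sided maximum (the paper phrases this via the walk $S^*_k=-X_{n-m+1-k}-\cdots-X_{n-m}$ and its first-passage time $\tau^*$, which is just your $\hat S$ and its maximum in different notation), the further split at $\lfloor n/4\rfloor$ combined with the concentration bound $\sup_w\pr(S_k\in(w,w+1])=O(1/c_k)$ and the estimate \eqref{tau+} with \eqref{H-tau}, and finally the comparison $\pr(T_g>m)\asymp\e Z^*_n/V(c_n)$ together with \eqref{VH}. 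The only cosmetic difference is the order in which you relax the moving boundary to a constant one and apply time reversal.
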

\begin{proof}
Set $m=[n/2]$. By the Markov property at time $m$,
\begin{align*}
&\pr(S_n\in(x,x+1],T_g>n)\\
&\hspace{1cm}\le
\int_{g_m}^\infty\pr(S_m\in dy,T_g>m)\pr(S_{n-m}\in(x-y,x-y+1],\tau_{y+G_n}>n-m).
\end{align*}
Define $X^*_k=-X_{n-m+1-k}$, $S^*_k=X^*_1+X^*_2+\ldots+X^*_k$ for $k=1,2,\ldots,n-m$.
Define also $\tau^*_y:=\min\{k\ge 1: S^*_k<-y\}$. Then
\begin{align*}
&\pr(S_{n-m}\in(x-y,x-y+1],\tau_{y+G_n}>n-m)\\
&\hspace{1cm}\le
\pr(S^*_{n-m}\in[y-x-1,y-x),\tau^*_{x+1+G_n}>n-m).
\end{align*}
Since $S_k^*$ is also asymptotically stable, one has the following standard
bound for the concentration function:
$$
\sup_x\pr(S_n^*\in(x,x+1])\le\frac{C}{c_n}.
$$
Using this bound, we infer that
\begin{align*}
&\pr(S^*_n\in(x,x+1],\tau_y^*>n)\\
&\hspace{1cm}\le\int_{-\infty}^\infty\pr(S^*_{n/2}\in(x,x+1],\tau_y^*>n/2)\pr(S_{n/2}^*\in(x-y,x-y+1])\\
&\hspace{1cm}\le\frac{C}{c_{n/2}}\pr(\tau^*_y>n/2).
\end{align*}
Therefore,
\begin{align*}
&\pr(S_{n-m}\in(x-y,x-y+1],\tau_{y+G_n}>n-m)\\
&\hspace{1cm}=
O\left(\frac{\pr(\tau^*_{x+1+G_n}>(n-m)/2)}{c_{(n-m)/2}}\right)
=O\left(\frac{\pr(\tau^*_{x+1+G_n}>n)}{c_{n}}\right).
\end{align*}
It is obvious that $\pr(\tau^*_{x+1+G_n}>n)=\pr(\tau^+_{x+1+G_n}>n)$.
Then, taking into account \eqref{tau+} and \eqref{H-tau}, we conclude that
$$
\pr(S_n\in(x,x+1],T_g>n)=O\left(\frac{H(\min\{c_n,x+G_n\})}{c_nH(c_n)}\pr(T_g>n)\right).
$$
Recalling that $\pr(T_g>n)=O(\e Z^*_n/V(c_n))$ and using \eqref{VH},
we obtain the desired bound.
\end{proof}
\begin{lemma}
\label{lem:V-diff}
Assume that the conditions of Theorem~\ref{T1} are valid.
Assume, in addition, that \eqref{lrt} holds.
Then
$$
\e[V(S_n+G_{2n})-V(S_n+G_n);T_g>n]=O\left(\frac{G_{2n}}{c_n}\e Z^*_n\right)
$$
and
$$
\e[V(S_n-G_{n})-V(S_n-G_{2n});T_g>n]=O\left(\frac{G_{2n}}{c_n}\e Z^*_n\right).
$$
\end{lemma}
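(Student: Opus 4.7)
My plan is to derive from hypothesis \eqref{lrt} the telescoping inequality
\[
V(y+b)-V(y)\le Cb\frac{V(y+b)}{y},\qquad y,b>0,
\]
obtained by summing $V(y+k+1)-V(y+k)\le CV(y+k)/(y+k)$ for $k=0,\dots,b-1$ and using the monotonicity of $V$. Paired with the trivial subadditive bound $V(y+b)-V(y)\le V(b)$, this gives two complementary tools: one suited to $y$ of order at least $b$, and one to the small-$y$ regime where \eqref{lrt} offers no improvement. The argument then proceeds by partitioning $\{T_g>n\}$ by the size of the relevant argument of $V$ relative to the two thresholds $G_{2n}$ and $c_n$, controlling the bulk (up to $c_n$) via Lemma~\ref{lem:local} and the tail (past $c_n$) via Lemma~\ref{lem:moments}.

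For the first identity I would take $y=S_n+G_n$, which is strictly positive on $\{T_g>n\}$, and $b=G_{2n}-G_n\le G_{2n}$. Three ranges arise. On $\{0<y\le G_{2n}\}$ I would use the crude bound $V(G_{2n})$ for the difference together with Lemma~\ref{lem:local}; the resulting factor $\sum_{y\le G_{2n}}H(y)$ is of order $G_{2n}H(G_{2n})$, so this block contributes $V(G_{2n})H(G_{2n})\cdot(G_{2n}/c_n)/n\cdot\e Z^*_n$, which is $o((G_{2n}/c_n)\e Z^*_n)$ since $VH$ is regularly varying of index $\alpha>0$ and $V(c_n)H(c_n)\sim n$ by \eqref{VH}. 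On $\{G_{2n}<y\le c_n\}$ the telescoping bound gives $V(y+b)-V(y)\le CG_{2n}V(y)/y$; since $V(y)H(y)/y$ is regularly varying of index $\alpha-1>-1$, the partial sum up to $c_n$ is of order $V(c_n)H(c_n)\sim n$, producing the target $O((G_{2n}/c_n)\e Z^*_n)$. Finally, on $\{y>c_n\}$ the telescoping bound combined with $V(y+b)\asymp V(y)$ (valid because $b\ll c_n<y$) yields $V(y+b)-V(y)\le C(G_{2n}/c_n)V(y)$, and Lemma~\ref{lem:moments} applied with $r_n=G_n=o(c_n)$ gives $\e[V(S_n+G_n);T_g>n]\sim\e Z^*_n$, closing this case.

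For the second identity I would reuse the same decomposition with $y=S_n-G_{2n}$, extending $V$ by $0$ on $(-\infty,0)$. The event $\{S_n<G_n\}$ contributes nothing; on $\{G_n\le S_n\le 3G_{2n}\}$ the crude bound $V(G_{2n})$ together with Lemma~\ref{lem:local} again gives $o((G_{2n}/c_n)\e Z^*_n)$ by the identical arithmetic; on $\{3G_{2n}<S_n\le c_n\}$ the argument $y$ is comparable to $S_n$, so the telescoping bound gives $V(S_n-G_n)-V(S_n-G_{2n})\le CG_{2n}V(S_n)/S_n$, which is handled as in the middle range above; and on $\{S_n>c_n\}$ the estimate $V(S_n-G_n)-V(S_n-G_{2n})\le C(G_{2n}/c_n)V(S_n)$ combined with Lemma~\ref{lem:moments} (with $r_n=0$) completes the proof.

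The main obstacle is the small-argument regime, where \eqref{lrt} provides no useful quantitative refinement and one must rely on Lemma~\ref{lem:local} for the local density of $S_n$ together with the identity $V(c_n)H(c_n)\sim n$ to confirm that this delicate block is genuinely lower order than $(G_{2n}/c_n)\e Z^*_n$. The rest is routine summation of regularly varying sequences, with the precise prefactor $G_{2n}/c_n$ emerging from the arithmetic of the regular-variation indices $\alpha(1-\rho)$ of $V$ and $\alpha\rho$ of $H$.
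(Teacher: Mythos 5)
Your argument follows essentially the same route as the paper: the same three-range decomposition of $\{T_g>n\}$ relative to the thresholds $G_{2n}$ and $c_n$, the crude subadditive bound $V(G_{2n})$ plus Lemma~\ref{lem:local} and $V(c_n)H(c_n)\sim n$ for the small block, the \eqref{lrt}-telescoping bound plus Lemma~\ref{lem:local} and Karamata summation for the middle block, and the \eqref{lrt}-bound together with $V(c_n)\pr(T_g>n)\sim\e Z^*_n$ (your Lemma~\ref{lem:moments} invocation being an equivalent formulation) for the tail block. The proposal is correct and matches the paper's approach, and it also fills in the second estimate, which the paper omits by symmetry.
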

\begin{proof}
We first note that the subadditivity of $V$ implies the bound
\begin{align*}
&\e[V(S_n+G_{2n})-V(S_n+G_n);S_n<G_{2n},T_g>n]\\
&\hspace{1cm}\leq V(G_{2n})\pr(S_n<G_{2n},T_g>n).
\end{align*}
Applying Lemma~\ref{lem:local}, we then get
\begin{align*}
\e[V(S_n+G_{2n})-V(S_n+G_n);S_n<G_{2n},T_g>n]\\
=O\left(G_{2n}\frac{H(G_{2n})V(G_{2n})}{nc_n}\e Z^*_n\right).
\end{align*}
Recalling that $G_n=o(c_n)$ and using \eqref{VH}, we infer that
$$
H(G_{2n})V(G_{2n})=o(n).
$$
As a result,
\begin{equation}
\label{V-diff.1}
\e[V(S_n+G_{2n})-V(S_n+G_n);S_n<G_{2n},T_g>n]=o\left(\frac{G_{2n}}{c_n}\e Z^*_n\right).
\end{equation}
Furthermore, it follows from \eqref{lrt} that, uniformly for $x\in(G_{2n},c_n)$,
\begin{align*}
\e[V(S_n+G_{2n})-V(S_n+G_n);S_n\in(x,x+1],T_g>n]\\
=O\left(G_{2n}\frac{V(x)}{x}\pr(S_n\in(x,x+1],T_g>n)\right).
\end{align*}
Applying now Lemma~\ref{lem:local}, we conclude that
\begin{align*}
\e[V(S_n+G_{2n})-V(S_n+G_n);S_n\in(x,x+1],T_g>n]\\
=O\left(G_{2n}\frac{V(x)H(x)}{xnc_n}\e Z^*_n\right).
\end{align*}
Therefore,
\begin{align*}
\e[V(S_n+G_{2n})-V(S_n+G_n);S_n\in(G_{2n},c_n],T_g>n]\\
=O\left(\frac{G_{2n}}{nc_n}\e Z^*_n\sum_{k=[G_{2n}]}^{[c_n]+1}\frac{V(k)H(k)}{k}\right).
\end{align*}
Recalling that $V(x)H(x)$ is regularly varying with index $\alpha$ and taking into account
\eqref{VH}, we arrive at
\begin{align}
\label{V-diff.2}
\e[V(S_n+G_{2n})-V(S_n+G_n);S_n\in(G_{2n},c_n],T_g>n]
=O\left(\frac{G_{2n}}{c_n}\e Z^*_n\right).
\end{align}
Using \eqref{lrt} once again and noting that the function $\frac{V(x)}{x}$ is
eventually non-increasing, we get
\begin{align*}
\e[V(S_n+G_{2n})-V(S_n+G_n);S_n>c_n,T_g>n]
=O\left(G_{2n}\frac{V(c_n)}{c_n}\pr(T_g>n)\right).
\end{align*}
By Theorem~\ref{T1} and \eqref{V-tau}, $V(c_n)\pr(T_g>n)\sim\e Z^*_n$. Consequently,
\begin{align*}
\e[V(S_n+G_{2n})-V(S_n+G_n);S_n>c_n,T_g>n]=O\left(\frac{G_{2n}}{c_n}\e Z^*_n\right).
\end{align*}
Combining this with \eqref{V-diff.1} and \eqref{V-diff.2}, we complete
the proof of the first estimate. The second one can be derived by using the same arguments.
For this reason we omit its proof.
\end{proof}
%%%%%%%%%%%%%%%%%%%%%%%%%%%%%%%%%%%%%%%%%%%%%%%%%%%%%%%%%%%%%%%%%%%%%%%%%%%%%%%%%%%%%%%%%%%%%%%%%%%%%%%
\subsection{Proof of Theorem~\ref{T2}(a)}
For every $m\in(n,2n]$ we have
\begin{align*}
&\e[V(S_m+G_m);T_g>m]\\
&\hspace{0.3cm}=\int_{-G_n}^\infty\pr(S_n\in dx;T_g>n)\e[V(x+S_{m-n}+G_m);\min_{k\leq n-m}(x+S_k-g_{n+k})>0]\\
&\hspace{0.3cm}\le \int_{-G_n}^\infty\pr(S_n\in dx;T_g>n)\e[V(x+S_{m-n}+G_{2n});\tau_{x+G_{2n}}>n-m].
\end{align*}
Recalling that $V(y+S_k)\mathbb{I}\{\tau_y>k\}$ is martingale, we obtain
\begin{align*}
&\max_{m\in(n,2n]}\e[V(S_m+G_m);T_g>m]\\
&\hspace{1cm}\le \e[V(S_n+G_{2n});T_g>n]\\
&\hspace{1cm}=\e[V(S_n+G_{n});T_g>n]+\e[V(S_n+G_{2n})-V(S_n+G_n);T_g>n].
\end{align*}
Applying the first estimate from Lemma~\ref{lem:V-diff} and noting that
$$
\e Z^*_n=\e[V(S_n-g_n);T_g>n]\le \e[V(S_n+G_n);T_g>n],
$$
we infer that, for some constant $B$ and all $n\geq 1$,
\begin{align*}
\max_{m\in(n,2n]}\e[V(S_m+G_m);T_g>m]
\le \e[V(S_n+G_{n});T_g>n]\left(1+B\frac{G_{2n}}{c_n}\right).
\end{align*}
Thus, for every $\ell\ge1$,
\begin{align*}
\max_{n\le 2^\ell}\e[V(S_m+G_m);T_g>m]
\le \e[V(S_1+G_1);T_g>1]
\prod_{j=0}^{\ell-1}\left(1+B\frac{G_{2^{j+1}}}{c_{2^{j}}}\right).
\end{align*}
It is obvious that \eqref{T2.1} implies that
$$
\sum_{j=1}^\infty\frac{G_{2^{j+1}}}{c_{2^{j}}}<\infty.
$$
Therefore,
\begin{equation*}
\sup_{n\geq1}\e[V(S_n+G_{n});T_g>n]<\infty.
\end{equation*}
Recalling that $U_g(n)=\e Z^*_n$ is bounded from above by $\e[V(S_n+G_{n});T_g>n]$,
we get the upper bound in \eqref{T2.2}.

The proof of the lower bound in \eqref{T2.1} is very similar to the proof of the upper
bound. We first note that
$$
\e Z^*_n\ge \e[V(S_n-G_n);T_g>n].
$$
Furthermore, for every $m\in(n,2n]$,
\begin{align*}
&\e[V(S_m-G_m);T_g>m]\\
&\hspace{0.3cm}\ge \int_{G_{2n}}^\infty\pr(S_n\in dx;T_g>n)\e[V(x+S_{m-n}-G_{2n});\tau_{x-G_{2n}}>n-m]\\
&\hspace{0.3cm}=\e[V(S_n-G_{2n});T_g>n]\\
&\hspace{0.3cm}=\e[V(S_n-G_{n});T_g>n]-\e[V(S_n-G_n)-V(S_n-G_{2n});T_g>n].
\end{align*}
Using the second estimate from Lemma~\ref{lem:V-diff} and recalling that,
by Lemma~\ref{lem:moments},
$\e Z^*_n\sim\e[V(S_n-G_n);T_g>n]$, we arrive at the inequality
\begin{align*}
\min_{m\in(n,2n]}\e[V(S_m-G_m);T_g>m]
\ge \e[V(S_n-G_{n});T_g>n]\left(1-B\frac{G_{2n}}{c_n}\right).
\end{align*}
Choosing $n_0$ so that $B\frac{G_{2n}}{c_n}<\frac{1}{2}$ for all $n>n_0$ we then get
\begin{align*}
&\min_{n\le n_02^\ell}\e[V(S_n-G_n);T_g>n]\\
&\hspace{0.5cm}\ge \min_{n\le n_0}\e[V(S_n-G_n);T_g>n]
\prod_{j=0}^{\ell-1}\left(1-B\frac{G_{n_02^{j+1}}}{c_{n_02^{j}}}\right).
\end{align*}
From this bound and \eqref{T2.2} we obtain the desired lower bound.
%%%%%%%%%%%%%%%%%%%%%%%%%%%%%%%%%%%%%%%%%%%%%%%%%%%%%%%%%%%%%%%%%%%%%%%%%%%%%%%%%%%%%%%%%%%%%%%%%%%%%%%%
\subsection{Proof of Theorem~\ref{T2}(b)}
If $g_n$ increases, then, according to Lemma 4 in \cite{DW16}, the sequence
$V(S_n-g_n)\mathbb{I}\{T_g>n\}$ is a supermartingale. In particular, the
sequence $\e Z^*_n$ decreases and has finite limit. The positivity of the
limit follows from \eqref{T2.2}.

If $g_n$ decreases, then $V(S_n-g_n)\mathbb{I}\{T_g>n\}$ is a submartingale,
see Lemma 1 in \cite{DW16}. This implies that the limit of $\e Z^*_n$ is
positive. Its finiteness follows from \eqref{T2.2}.
%%%%%%%%%%%%%%%%%%%%%%%%%%%%%%%%%%%%%%%%%%%%%%%%%%%%%%%%%%%%%%%%%%%%%%%%%%%%%%%%%%%%%%%%%%%%%%%%%%%%%%%%
%%%%%%%%%%%%%%%%%%%%%%%%%%%%%%%%%%%%%%%%%%%%%%%%%%%%%%%%%%%%%%%%%%%%%%%%%%%%%%%%%%%%%%%%%%%%%%%%%%%%%%%%
%%%%%%%%%%%%%%%%%%%%%%%%%%%%%%%%%%%%%%%%%%%%%%%%%%%%%%%%%%%%%%%%%%%%%%%%%%%%%%%%%%%%%%%%%%%%%%%%%%%%%%%%

\section{Functional convergence}
\subsection{Proof of the conditional limit theorem}
Fix some sequence $m(n)=o(n)$ such that \eqref{m(n)-prop} holds. Let $\delta_n$ satisfy the condition
$$
G_n\ll \oo{\delta}_n^2c_n\ll c_{m(n)}\ll \oo{\delta}_nc_n.
$$
By the Markov property and \eqref{lem1.eq2},
\begin{align*}
&\pr(T_g>n, Z_{\nu_{m(n)}}>\oo{\delta}_n c_n)\\
&\hspace{0.5cm}=\int_{\oo{\delta}_n c_n}^\infty \pr(Z_{\nu_{m(n)}}\in dz, T_g>\nu_{m(n)})
\pr\left(z+\min_{\nu_{m(n)}\le j\le n}(Z_j-Z_{\nu_{m(n)}})>0\right)\\
&\hspace{0.5cm}\le\int_{\oo{\delta}_n c_n}^\infty \pr(Z_{\nu_{m(n)}}\in dz, T_g>\nu_{m(n)})
\pr\left(z+2G_n+\min_{j\le n-m(n)}S_j>0\right)\\
&\hspace{0.5cm}\le C_0\pr(\tau_0>n-m(n))\e[V(Z_{\nu_{m(n)}}+2G_n);T_g>\nu_{m(n)},Z_{\nu_{m(n)}}>\oo{\delta}_n c_n].
\end{align*}
Since $G_n\ll \oo{\delta}_nc_n$ and $m(n)=O(n)$, we have
$V(Z_{\nu_{m(n)}}+2G_n)=O(V(Z_{\nu_{m(n)}}))$ uniformly on the evernt
$\{Z_{\nu_{m(n)}}>\oo{\delta}_n c_n\}$. Consequently,
$$
\pr(T_g>n, Z_{\nu_{m(n)}}>\oo{\delta}_n c_n)=
O\left(\pr(\tau_0>n)\e[Z^*_{\nu_{m(n)}};Z_{\nu_{m(n)}}>\oo{\delta}_n c_n]\right).
$$
Now, in view of Lemma~\ref{lem:Z-exp} and \eqref{m(n)-prop},
\begin{equation}
\label{T1.4}
\pr(T_g>n, Z_{\nu_{m(n)}}>\oo{\delta}_n c_n)=
o\left(\pr(\tau_0>n)\e Z^*_n\right).
\end{equation}
Using the Markov property and \eqref{lem1.eq2} once again, we obtain
\begin{align*}
&\pr(T_g>n, Z_{\nu_{m(n)}}<\oo{\delta}^2_n c_n)\\
&\hspace{0.5cm}=\int_0^{\oo{\delta}^2_n c_n} \pr(Z_{\nu_{m(n)}}\in dz, T_g>\nu_{m(n)})
\pr\left(z+\min_{\nu_{m(n)}\le j\le n}(Z_j-Z_{\nu_{m(n)}})>0\right)\\
&\hspace{0.5cm}\le\int_0^{\oo{\delta}^2_n c_n} \pr(Z_{\nu_{m(n)}}\in dz, T_g>\nu_{m(n)})
\pr\left(z+2G_n+\min_{j\le n-m(n)}S_j>0\right)\\
&\hspace{0.5cm}\le C_0 V(\oo{\delta}^2_n c_n) \pr(\tau_0>n-m(n))\pr(T_g>\nu_{m(n)}).
\end{align*}
Then, according to \eqref{UB2} and \eqref{m(n)-prop},
$$
\pr(T_g>n, Z_{\nu_{m(n)}}<\oo{\delta}^2_n c_n)=
O\left(V(\oo{\delta}^2_n c_n)\pr(\tau_0>n)\e Z^*_{n}\pr(\tau_0>m(n))\right).
$$
Using the relation $\pr(\tau_0>m(n))\sim C/V(c_{m(n)})$ and the assumption
$c_{m(n)}\gg\oo{\delta}^2_n c_n$, we get
$$
V(\oo{\delta}^2_n c_n)\pr(\tau_0>m(n))\to0.
$$
Therefore,
\begin{equation}
\label{T1.5}
\pr(T_g>n, Z_{\nu_{m(n)}}<\oo{\delta}^2_n c_n)=o\left(\pr(\tau_0>n)\e Z^*_n\right).
\end{equation}

Let $f$ be a uniformly continuous and bounded functional on the space $D[0,1]$.
Without loss of generality, we may assume that $0\le f\le 1$. It follows then
from \eqref{T1.4}, \eqref{T1.5} and Theorem~\ref{T1} that
\begin{align}
\label{T1.6}
\e\left[f(s_n);T_g>n\right]=
\e\left[f(s_n);Z_{\nu_{m(n)}}\in[\oo{\delta}^2_n c_n,\oo{\delta}_n c_n],T_g>n\right]
+o(\pr(T_g>n)).
\end{align}
For every $k\geq0$ and every $y\in\mathbb{R}$ define a functional $f(k,y;\cdot)$ by the following
relation:
$$
f(k,y;h):=f\left(y+\left(h(t)-h\left(\frac{k}{n}\right)\right)
\mathbb{I}\left\{t\geq \frac{k}{n}\right\}\right),\quad h\in D[0,1].
$$
It follows from the definition of $\nu_{m(n)}$ that
\begin{align*}
\frac{\max_{k\leq\nu_{m(n)}}|S_k-S_{\nu_{m(n)}}|}{c_n}
&\leq \frac{\max_{k\leq\nu_{m(n)}}|Z_k-Z_{\nu_{m(n)}}|}{c_n}+\frac{2G_n}{c_n}\\
&\leq \frac{Z_{\nu_{m(n)}}}{c_n}+\frac{3G_n}{c_n}
\leq \oo{\delta}_n+\frac{3G_n}{c_n}
\end{align*}
on the event $\{Z_{\nu_{m(n)}}\le \oo{\delta}_nc_n,T_g>\nu_{m(n)}\}$.
From this bound and the uniform continuity of the functional $f$ we infer that
$$
f(s_n)-f\left(\nu_{m(n)},\frac{S_{\nu_{m(n)}}}{c_n};s_n\right)=o(1)
\quad\text{on the event }\{Z_{\nu_{m(n)}}\le \oo{\delta}_nc_n,T_g>\nu_{m(n)}\}.
$$
Combining this estimate with \eqref{T1.6}, we obtain
\begin{align}
\label{T1.7}
\nonumber
&\e\left[f(s_n);T_g>n\right]\\
&=
\e\left[f\left(\nu_{m(n)},\frac{S_{\nu_{m(n)}}}{c_n};s_n\right);Z_{\nu_{m(n)}}\in[\oo{\delta}^2_n c_n,\oo{\delta}_n c_n],T_g>n\right]
+o(\pr(T_g>n)).
\end{align}
By the Markov property at $\nu_{m(n)}$,
\begin{align*}
&\mathbf{E}\left[f\left(\nu_{m(n)},\frac{S_{\nu_{m(n)}}}{c_n},s_n\right);
T_g>n,Z_{\nu_{m(n)}}\in[\oo{\delta}^2_n c_n,\oo{\delta}_n c_n]\right]\\
&\hspace{1cm}=\sum_{k=1}^{m(n)}\int_{\oo{\delta}^2_n c_n}^{\oo{\delta}_n c_n}\mathbf{P}(Z_{k}\in dy,\nu_{m(n)}=k, T_g>k)\\
&\hspace{3cm}\times\mathbf{E}\left[f\left(k,\frac{y+g_k}{c_n};s_n\right);y+\min_{j\in[k,n]}(Z_j-Z_k)>0\right].
\end{align*}
We now note that it suffices to show that, uniformly in $y\in[\oo{\delta}^2_n c_n,\oo{\delta}_n c_n]$
and $k\leq m(n)$,
\begin{align}
\label{T1.8}
\nonumber
&\mathbf{E}\left[f\left(k,\frac{y+g_k}{c_n},s_n\right);y+\min_{j\in[k,n]}(Z_j-Z_k)>0\right]\\
&\hspace{3cm}=(\mathbf{E}f(M_{\alpha,\beta})+o(1))V(y)\pr(\tau_0>n).
\end{align}
Indeed, this relation implies that
\begin{align*}
&\mathbf{E}\left[f\left(\nu_{m(n)},\frac{S_{\nu_{m(n)}}}{c_n},s_n\right);
T_g>n,Z_{\nu_{m(n)}}\in[\oo{\delta}^2_n c_n,\oo{\delta}_n c_n]\right]\\
&\hspace{0.2cm}=(\mathbf{E}f(M_{\alpha,\beta})+o(1))\pr(\tau_0>n)
\mathbf{E}[V(Z_{\nu_m(n)});T_g>\nu_{m(n)},Z_{\nu_{m(n)}}\in[\oo{\delta}^2_n c_n,\oo{\delta}_n c_n]].
\end{align*}
It follows from the assumption $\oo{\delta}^2_n c_n\ll c_{m(n)}$ and the definition
of $\nu_{m(n)}$ that
\begin{align*}
&\mathbf{E}[V(Z_{\nu_m(n)});T_g>\nu_{m(n)},Z_{\nu_{m(n)}}<\oo{\delta}^2_n c_n]\\
&\hspace{1cm}=\mathbf{E}[V(Z_{\nu_m(n)});T_g>m(n),Z_{m(n)}<\oo{\delta}^2_n c_n]\\
&\hspace{2cm}\le V(\oo{\delta}^2_n c_n)\pr(T_g>m(n)).
\end{align*}
Applying now Theorem~\ref{T1} and recalling that $\e Z^*_{m(n)}\sim \e Z^*_n$, we get
\begin{align*}
&\mathbf{E}[V(Z_{\nu_m(n)});T_g>\nu_{m(n)},Z_{\nu_{m(n)}}<\oo{\delta}^2_n c_n]\\
&\hspace{1cm}=O(V(\oo{\delta}^2_n c_n)\e Z^*_n\pr(\tau_0>m(n))).
\end{align*}
Using now \eqref{V-tau} and the assumption $\oo{\delta}^2_n c_n\ll c_{m(n)}$, we coclude that
\begin{align*}
\mathbf{E}[V(Z_{\nu_m(n)});T_g>\nu_{m(n)},Z_{\nu_{m(n)}}<\oo{\delta}^2_n c_n]=o(\e Z^*_n).
\end{align*}
We know that $c_{m(n)}\ll \oo{\delta}_n c_n$. Then, by Lemma~\ref{lem:tail_exp},
\begin{align*}
\mathbf{E}[V(Z_{\nu_m(n)});T_g>\nu_{m(n)},Z_{\nu_{m(n)}}>\oo{\delta}_n c_n]=o(\e Z^*_n).
\end{align*}
From the two relations we infer that
\begin{align*}
\mathbf{E}[V(Z_{\nu_m(n)});T_g>\nu_{m(n)},Z_{\nu_{m(n)}}\in[\oo{\delta}^2_n c_n,\oo{\delta}_n c_n]]
\sim \e Z^*_n
\end{align*}
and, consequently,
\begin{align*}
&\mathbf{E}\left[f\left(\nu_{m(n)},\frac{S_{\nu_{m(n)}}}{B_n},s_n\right);
T_g>n,Z_{\nu_{m(n)}}\in[\oo{\delta}^2_n c_n,\oo{\delta}_n c_n]\right]\\
&\hspace{2cm}\sim \mathbf{E}f(M_{\alpha,\beta})\e Z^*_n\pr(\tau_0>n)
\sim \mathbf{E}f(M_{\alpha,\beta})\pr(T_g>n).
\end{align*}
Plugging this into \eqref{T1.7}, we have
\begin{align*}
\e[f(s_n);T_g>n]\sim \mathbf{E}f(M_{\alpha,\beta})\pr(T_g>n).
\end{align*}
This implies immediately the desired weak convergence. Thus, it remains to show \eqref{T1.8}.

We shall prove \eqref{T1.8} by giving bounds for the expectation on the left hand side in terms
of boundary problems wit constant boundaries. More precisely,
\begin{align*}
&\mathbf{E}\left[f\left(k,\frac{y+g_k}{c_n},s_n\right);y+\min_{j\in[k,n]}(Z_j-Z_k)>0\right]\\
&\hspace{1cm}\le\e \left[f\left(k,\frac{y+g_k}{c_n},s_n\right);\tau_{y+2G_n}>n\right]\\
&\hspace{1cm}=\e \left[f\left(k,\frac{y+g_k}{c_n},s_n\right)|\tau_{y+2G_n}>n\right]\pr(\tau_{y+G_n}>n).
\end{align*}
Note that $|f\left(k,\frac{S_k}{c_n},s_n\right)-f(s_n)|\to0$ uniformly over all
trajectories $s_n$ with $S_k\le G_n+\overline{\delta}_nc_n$. This convergene is also
uniform in $k\le m(n)$. Then, using Lemma~\ref{lem:meander} and \eqref{lem1.eq1}, we get
\begin{align*}
&\mathbf{E}\left[f\left(k,\frac{y+g_k}{c_n},s_n\right);y+\min_{j\in[k,n]}(Z_j-Z_k)>0\right]\\
&\hspace{1cm}\le V(y+2G_n)\pr(\tau_0>n)\e f(M_{\alpha,\beta})(1+o(1)).
\end{align*}
Noting that $V(y+2G_n)\sim V(y)$ for $y\in[\oo{\delta}^2_n c_n,\oo{\delta}_n c_n]$, we obtain the upper bound
\begin{align*}
&\mathbf{E}\left[f\left(k,\frac{y+g_k}{c_n},s_n\right);y+\min_{j\in[k,n]}(Z_j-Z_k)>0\right]\\
&\hspace{1cm}\le V(y)\pr(\tau_0>n)\e f(M_{\alpha,\beta})(1+o(1)).
\end{align*}
By the same argument,
\begin{align*}
&\mathbf{E}\left[f\left(k,\frac{y+g_k}{c_n},s_n\right);y+\min_{j\in[k,n]}(Z_j-Z_k)>0\right]\\
&\hspace{1cm}\ge\e \left[f\left(k,\frac{y+g_k}{c_n},s_n\right);\tau_{y-2G_n}>n\right]\\
&\hspace{1cm}\ge V(y)\pr(\tau_0>n)\e f(M_{\alpha,\beta})(1+o(1)).
\end{align*}
These two estimates imply \eqref{T1.8}. Thus, the proof of the functional limit theorem is completed.

\subsection{Proof of (\ref{U-infinity})}
Since the sequence $\{g_n\}$ is decreasing, the sequence $V(S_n-g_n)\mathbb{I}\{T_g>n\}$ is
a submartingale and, in particular, the sequence $\e[V(S_n-g_n);T_g>n]$ is increasing. Thus,
it suffices to show that $\e[V(S_{2^j}-g_{2^j});T_g>{2^j}]$ converges to $\infty$.
We first note that
\begin{align*}
&\e[V(S_{2^{j+1}}-g_{2^{j+1}});T_g>{2^{j+1}}]\\
&\ge\int_{g_{2^j}}^\infty \pr(S_{2^j}\in dy;T_g>2^j)
\e[V(y+S_{2^j}-g_{2^{j+1}});\tau_{y-g_{2^j}}>2^j]\\
&=\e[V(S_{2^j}-g_{2^j});T_g>2^j]\\
&+\int_{g_{2^j}}^\infty \pr(S_{2^j}\in dy;T_g>2^j)
\e[V(y+S_{2^j}-g_{2^{j+1}})-V(y+S_{2^j}-g_{2^{j}});\tau_{y-g_{2^j}}>2^j],
\end{align*}
where we have used the harmonicity of $V$ in the last step. Furthermore, since all terms
in the integral are positive, we have
\begin{align*}
&\e[V(S_{2^{j+1}}-g_{2^{j+1}});T_g>{2^{j+1}}]-\e[V(S_{2^j}-g_{2^j});T_g>2^j]\\
&\hspace{1cm}\geq\int_{c_{2^j}}^{2c_{2^j}} \pr(S_{2^j}\in dy;T_g>2^j)\\
&\hspace{3cm}\times\e[V(y+S_{2^j}-g_{2^{j+1}})-V(y+S_{2^j}-g_{2^{j}});\tau_{y-g_{2^j}}>2^j].
\end{align*}
Since $V$ is a renewal function, there exists a positive constant $C$ such that
$$
\liminf_{x\to\infty}\frac{x}{V(x)}(V(x+u)-V(x))\ge Cu
$$
for all $u$ large enough.
Therefore,
\begin{align*}
&\e[V(S_{2^{j+1}}-g_{2^{j+1}});T_g>{2^{j+1}}]-\e[V(S_{2^j}-g_{2^j});T_g>2^j]\\
&\hspace{1cm}\geq\int_{c_{2^j}}^{2c_{2^j}} \pr(S_{2^j}\in dy;T_g>2^j)\\
&\hspace{3cm}\times C'(g_{2^j}-g_{2^{j+1}})\frac{V(c_{2^j})}{c_{2^j}}
\pr(S_{2^j}\in[c_{2^j},2c_{2^j}],\tau_{y-g_{2^j}}>2^j).
\end{align*}
Applying now the standard (non-conditional) limit theorem for $S_n$ and Theorem~\ref{T3},
we obtain
\begin{align*}
&\e[V(S_{2^{j+1}}-g_{2^{j+1}});T_g>{2^{j+1}}]-\e[V(S_{2^j}-g_{2^j});T_g>2^j]\\
&\hspace{1cm}\geq C''(g_{2^j}-g_{2^{j+1}})\frac{V(c_{2^j})}{c_{2^j}}\pr(T_g>2^j).
\end{align*}
Combining Theorem~\ref{T1} and \eqref{V-tau}, we have
$$
V(c_{2^j})\pr(T_g>2^j)\sim A \e[V(S_{2^j}-g_{2^j});T_g>2^j].
$$
Consequently,
\begin{align*}
&\e[V(S_{2^{j+1}}-g_{2^{j+1}});T_g>{2^{j+1}}]\ge
\e[V(S_{2^j}-g_{2^j});T_g>2^j]\left(1+C'''\frac{g_{2^j}-g_{2^{j+1}}}{c_{2^j}}\right).
\end{align*}
Iterating this estimate, we obtain
\begin{align*}
&\e[V(S_{2^{j+1}}-g_{2^{j+1}});T_g>{2^{j+1}}]\ge
\e[V(S_{1}-g_{1});T_g>1]\prod_{k=0}^j\left(1+c'''\frac{g_{2^k}-g_{2^{k+1}}}{c_{2^k}}\right).
\end{align*}
It remains to note that the condition $\sum \frac{|g_n|}{nc_n}=\infty$ implies that
the rigth hand side in the previous display goes to infinity as $j\to\infty$.

%%%%%%%%%%%%%%%%%%%%%%%%%%%%%%%%%%%%%%%%%%%%%%%%%%%%%%%%%%%%%%%%%%%%%%%%%%%%%%%%%%%%%%%%%%%%%%%%%%%%%%%%
%%%%%%%%%%%%%%%%%%%%%%%%%%%%%%%%%%%%%%%%%%%%%%%%%%%%%%%%%%%%%%%%%%%%%%%%%%%%%%%%%%%%%%%%%%%%%%%%%%%%%%%%
%\input{moving.bbl}

\printbibliography

\end{document}